\newtheorem{theorem}{Theorem}[section]
\newtheorem{lemma}[theorem]{Lemma}
\newtheorem{proposition}[theorem]{Proposition}
\newtheorem{corollary}[theorem]{Corollary}
\newcounter{intro}
\newtheorem{introthm}[intro]{Theorem}
\newtheorem{introcor}[intro]{Corollary}
\theoremstyle{definition}
\newtheorem{definition}[theorem]{Definition}
\newtheorem{example}[theorem]{Example}
\newtheorem{remark}[theorem]{Remark}
\newtheorem{chunk}[theorem]{}
\newtheorem{thm}{Theorem}[subsection]
\theoremstyle{definition}
\newtheorem{ch}[thm]{}
\title[Thick subcategories over a Koszul complex]{Classifying thick subcategories over a Koszul complex via the curved BGG correspondence}
\author[Jian Liu]{Jian Liu}
\address{School of Mathematics and Statistics, and Hubei Key Laboratory of Mathematical Sciences, Central China Normal University,  Wuhan 430079, P.R. China}
\email{jianliu@ccnu.edu.cn}
\author[Josh Pollitz]{Josh Pollitz}
\address{
Mathematics Department, 
Syracuse University, 
Syracuse, NY 13244 U.S.A.}
\email{jhpollit@syr.edu}
\keywords{thick subcategory, dg algebra, (curved) dg modules, Koszul complex, tensor modules, support, curved BGG correspondence}
\subjclass[2020]{13D09 (primary);  14M10, 18G80 (secondary)}
\DeclareMathOperator{\cone}{cone}
\DeclareMathOperator{\rank}{rank}
\DeclareMathOperator{\embdim}{embdim}
\DeclareMathOperator{\h}{H}
\newcommand{\Z}{\mathbb{Z}}
\newcommand{\T}{\mathsf{T}}
\newcommand{\D}{\mathsf{D}}
\DeclareMathOperator{\Df}{\mathsf{{D}^{f}}}
\newcommand{\del}{\partial}
\newcommand{\f}{{\bm{f}}}
\newcommand{\p}{\mathfrak{p}}
\DeclareMathOperator{\Ima}{Im}
\DeclareMathOperator{\id}{id}
\DeclareMathOperator{\Sing}{Sing}
\DeclareMathOperator{\cx}{cx}
\DeclareMathOperator{\spec}{Spec}
\DeclareMathOperator{\Proj}{Proj}
\newcommand{\Cat}{\mathsf{C}}
\DeclareMathOperator{\Hom}{Hom}
\DeclareMathOperator{\Ext}{Ext}
\DeclareMathOperator{\V}{V}
\DeclareMathOperator{\thick}{\mathsf{thick}}
\DeclareMathOperator{\supp}{supp}
\newcommand{\shift}{{\mathsf{\Sigma}}}
\DeclareMathOperator{\RHom}{\mathsf{RHom}}
\newcommand{\xra}{\xrightarrow}
\newcommand{\perf}{\mathsf{perf}}
\newcommand{\proj}{\mathsf{proj}}
\newcommand{\hc}{\operatorname{HH}}
\newcommand{\env}[2][]{{#2}_{#1}^{\operatorname{e}}}
\newcommand{\lotimes}{\otimes^{\operatorname{L}}}
\begin{document}

\begin{abstract}
In this work we classify the thick subcategories of the bounded derived category of dg modules over a Koszul complex on any list of elements in a regular ring. This simultaneously recovers a theorem of Stevenson when the list of elements is a regular sequence and the classification of thick subcategories for an exterior algebra over a field (via the BGG correspondence). One of the major ingredients is a classification of thick tensor submodules of perfect curved dg modules over a graded commutative noetherian ring concentrated in even degrees, recovering a theorem of Hopkins and Neeman. We give several consequences of the classification result over a Koszul complex, one being that the lattice of thick subcategories of the bounded derived category is fixed by Grothendieck duality. 
\end{abstract}

\maketitle

\section*{Introduction}

The problem of classifying thick (or localizing) subcategories of a given triangulated category has maintained a great deal of interest---in a variety of different areas---since the work of Devinatz, Hopkins, and Smith \cite{DHS} in stable homotopy theory; see, for instance, \cite{BIKann, BIKtop, CI, Hopkins, Neeman92, Stevenson:2014b, Stevenson:2014a,  Takahashi, Thomason}. Perhaps the most relevant to the present article is a theorem of Hopkins \cite{Hopkins} and Neeman \cite{Neeman92} that establishes a one-to-one correspondence between the thick subcategories of the 
 homotopy category of perfect complexes over a commutative noetherian ring and the specialization closed subsets of the prime spectrum of the ring. Thomason \cite{Thomason} later extended this result to quasi-compact and quasi-separated schemes, provided the thick subcategories are closed under a certain tensor action.  These foundational results have been widely applied in both algebra and topology.

 Let $R$ be a commutative noetherian ring, and write $\Df(R)$ for its bounded derived category; the objects are $R$-complexes with finitely generated total homology and morphisms are obtained by formally inverting quasi-isomorphisms. The theorem of Hopkins--Neeman yields a classification of the thick subcategories of $\Df(R)$ when $R$ is a regular ring. In the singular setting, the most notable classification of the thick subcategories of $\Df(R)$ is due to Stevenson~\cite{Stevenson:2014b} which establishes a classification when $R$ is a complete intersection in the sense that it is a regular ring modulo a regular sequence; see also \cite{Briggs-Walker}. In this article we are concerned with a more general setting that recovers Stevenson's result as well as the classification of thick subcategories over an exterior algebra over a field simultaneously. 
 
 Let $E$ be a Koszul complex on a list of elements $f_1,\ldots,f_n$ regarded as a dg $R$-algebra in the usual way; cf.\@ \cref{Section-thick subcategories}. Let $\Df(E)$ denote the derived category of dg $E$-modules whose homology is finitely generated over $R$. The main result of the article classifies the thick subcategories of $\Df(E),$ when $R$ is a regular ring; specializing to when $\f$ is an $R$-regular sequence recovers Stevenson's theorem, and specializing to $\f=0$ one obtains the classification of thick subcategories for an exterior algebra over a regular ring.  
 
 The classification is in terms of the support of the pair $(A,w)$, denoted $\supp(A,w)$, where $A=R[\chi_1,\ldots,\chi_n]$ with  $w=f_1\chi_1+\cdots +f_n\chi_n$ and where each $\chi_i$ has homological degree $-2$; see \cref{defn_sing} (as well as \cref{sing_locus}) for a precise definition of $\supp(A,w)$, and see \cref{decomposition} for a set theoretic description of it in this setting.  Finally, let $\V_E(M)$ denote the cohomological support of $M$ over $E$ as defined in \cite{Pol,Po} (inspired by the ones defined by Avramov and Buchweitz~\cite{Av2,ABInvent} over a complete intersection ring); cf.\@ \cref{Hochschild}. 

 \begin{introthm}\label{main_intro_thm}
     There is an inclusion preserving bijection  
      \[
\begin{tikzcd}
\biggm\{\begin{matrix}\text{Thick subcategories}\\ \text{of } \Df(E) \end{matrix} \biggm\}\arrow[r,shift left=0.8ex,"\sigma"]&\biggm\{\begin{matrix}\text{Specialization closed}\\ \text{subsets of } \supp(A,w)\end{matrix} \biggm\}\arrow[l,shift left=0.8ex,"\tau"]
\end{tikzcd}\,,
\]
where the inverse bijections are given by
\[
\sigma(\T)=\bigcup_{M\in \T}\V_E(M) \quad\text{and}\quad \tau(\mathcal{V})=\{M\in \Df(E)\mid \V_E(M)\subseteq \mathcal{V}\}\,.
\]
 \end{introthm}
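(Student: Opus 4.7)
The plan is to prove the theorem via the curved BGG correspondence, which is advertised in the title and in the abstract as the main tool. My strategy is to identify $\Df(E)$ with an appropriate triangulated category of perfect curved dg modules over the pair $(A,w)$ with $A=R[\chi_1,\ldots,\chi_n]$ and $w=f_1\chi_1+\cdots+f_n\chi_n$, and then invoke the classification of thick tensor submodules of perfect curved dg modules (the ingredient that the abstract singles out as simultaneously recovering Hopkins--Neeman).

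First, I would establish, or cite from earlier sections of the paper, a triangle-equivalence of the form
\[
\Df(E)\;\simeq\;\mathsf{Perf}(A,w),
\]
where the right hand side denotes a suitable homotopy category of perfect curved dg $(A,w)$-modules. The polynomial variables $\chi_i$ sit in homological degree $-2$ so that the $\Z$-grading matches the natural BGG bigrading, and the curvature $w=\sum f_i\chi_i$ records the failure of the generators of $E$ to square to zero when converted into contractions. This equivalence is where the regularity of $R$ enters, since it ensures that the ambient category $\Df(E)$ behaves well under the transfer to the $(A,w)$ side.

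Second, I would translate the support datum across this equivalence. The Hochschild-theoretic support $\V_E(M)$ from \cite{Pol,Po} should correspond, under curved BGG, to the natural ``curved support'' of the image of $M$, which by construction is a subset of $\supp(A,w)$. Concretely, the action of $\hh^*(E)$ on $M$ is compatible with the $A$-action coming from the curved side, so a support computation on either side yields the same closed subset of $\supp(A,w)$. Once this compatibility is in hand, the maps $\sigma$ and $\tau$ of the theorem become the BGG-transfers of the analogous maps between thick tensor submodules of $\mathsf{Perf}(A,w)$ and specialization closed subsets of $\supp(A,w)$.

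Third, I would apply the classification of thick tensor submodules of $\mathsf{Perf}(A,w)$. That result (the paper's curved-module generalization of Hopkins--Neeman) gives an inclusion-preserving bijection between those thick tensor submodules and specialization closed subsets of $\supp(A,w)$ via the curved support. Transferring this bijection along the equivalence, and using the support identification of the previous step, produces the bijection $\sigma\leftrightarrows\tau$ of the theorem.

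The main obstacle will be showing that every thick subcategory of $\Df(E)$ corresponds, under curved BGG, to a thick \emph{tensor} submodule of $\mathsf{Perf}(A,w)$; that is, that the tensor-closure hypothesis in the curved Hopkins--Neeman classification is automatic on the Koszul side. In the classical Hopkins--Neeman theorem this automaticity uses that $\Df(R)$ is generated by $\mathsf{perf}(R)$ when $R$ is regular; here the analogous input should be that $\Df(E)$ is generated over itself by objects whose BGG-transforms are generators of the tensor action on $\mathsf{Perf}(A,w)$. Establishing this compatibility of tensor structures, and in particular checking it on a convenient set of generators, is where I expect the bulk of the technical work to lie.
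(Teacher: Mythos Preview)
Your overall strategy matches the paper's proof exactly: the curved BGG equivalence of Martin identifies $\Df(E)$ with $\perf(A,w)$, cohomological support $\V_E(M)$ agrees with the curved support $\supp_{(A,w)}(\mathsf{h}M)$, and then one invokes the classification of thick tensor submodules of $\perf(A,w)$.

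Your handling of the ``main obstacle'' is, however, aimed at the wrong side of the equivalence and is more convoluted than necessary. The automaticity of tensor-closure is not about what generates $\Df(E)$; it is a statement purely about the acting category $\perf(A,0)$. Since $R$ is regular, the graded polynomial ring $A=R[\chi_1,\ldots,\chi_n]$ is a regular graded ring, and the paper shows directly (its Proposition on $\perf(A,0)\simeq\perf(A)$) that the tensor unit $A$ is a classical generator of $\perf(A,0)$. Once the unit generates the acting tensor category, every thick subcategory of any module category $\perf(A,w)$ is automatically a tensor submodule, by the elementary observation that $\thick^{\odot}(x)=\thick(x)$ whenever $\thick(\mathsf{1})$ is everything. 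So the ``bulk of the technical work'' you anticipate collapses to the single line: $A$ regular $\Rightarrow$ $A$ generates $\perf(A,0)$ $\Rightarrow$ thick $=$ thick tensor in $\perf(A,w)$. Your formulation in terms of generators of $\Df(E)$ whose BGG-transforms generate the tensor action would, if pursued literally, require comparing tensor structures across the BGG equivalence, which the paper never needs to do.
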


Specializing the theorem to when $\f$ is an $R$-regular sequence recovers the classification of Stevenson in \cite{Stevenson:2014b} for a complete intersection ring in \cref{cor_ci}. In the case $\f=0$, we have that $E$ is an exterior algebra over a regular ring and \cref{main_intro_thm} establishes a bijection between thick subcategories of $\Df(E)$ and the specialization closed subsets of $\spec(A);$ see \cref{cor_zero}. 

There are a number of consequences of \cref{main_intro_thm} on the homological algebra over $E$. For example, whenever $\supp(A,w)$ is closed there is a \emph{classical generator} for $\Df(E)$; cf.\@ \cref{cor_generator}. This applies when $\f=0$, or when $\f\neq 0$ and, for example, $R$ is further assumed to be quasi-excellent and local. 
In a different direction, recall that $E$ is a Gorenstein dg algebra in the sense of \cite{FJ}, and hence, $(-)^\vee=\RHom_E(-,E)$ is a self (quasi-)inverse auto-equivalence of $\Df(E)$. 
\begin{introcor}\label{intro_cor}
   For each thick subcategory $\T$ of $\Df(E)$, there is an equality $\T=\T^\vee$. 
\end{introcor}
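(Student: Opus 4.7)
The plan is to deduce the corollary from \cref{main_intro_thm} by showing that cohomological support is invariant under the duality $(-)^\vee$. Since $(-)^\vee=\RHom_E(-,E)$ is a (contravariant) auto-equivalence of $\Df(E)$, $\T^\vee$ is a thick subcategory whenever $\T$ is. By \cref{main_intro_thm}, the equality $\T=\T^\vee$ holds if and only if $\sigma(\T)=\sigma(\T^\vee)$, and
\[
\sigma(\T^\vee)=\bigcup_{N\in\T^\vee}\V_E(N)=\bigcup_{M\in\T}\V_E(M^\vee).
\]
Thus everything reduces to the pointwise statement that $\V_E(M)=\V_E(M^\vee)$ for every $M\in\Df(E)$.

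To verify the pointwise statement I would unpack the definition of $\V_E$ recalled in \cref{Hochschild}, where the support is read off from the action of the relevant polynomial (sub)ring of $\HH^*(E)$ on a finitely generated graded module built from $\Ext_E^*(M,-)$. Applying $(-)^\vee$ yields a natural isomorphism $\Ext_E^*(M,N)\cong \Ext_E^*(N^\vee,M^\vee)$; specializing to $N=M$ gives an anti-isomorphism of graded rings $\Ext_E^*(M,M)\cong \Ext_E^*(M^\vee,M^\vee)$. The key step is to check that this anti-isomorphism is compatible with the Hochschild cohomology action on each side. Once this compatibility is established, the $A$-module structures on the two sides have the same annihilator ideal, hence the same support, so $\V_E(M)=\V_E(M^\vee)$.

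The main obstacle is precisely this compatibility. To handle it I would exploit that $E$ is Gorenstein (\cite{FJ}), so $E^\vee$ is quasi-isomorphic to a shift of $E$ as a dg $E$-bimodule, together with the fact that the Hochschild action can be realized at the chain level by endomorphisms of a semi-free bimodule resolution of $E$. Transporting these chain-level operators through $\RHom_E(-,E)$---contravariantly in the $E$-module arguments but covariantly in the Hochschild coefficients via the bimodule self-duality of $E$---is routine but requires careful bookkeeping. With $\V_E(M)=\V_E(M^\vee)$ in hand, the corollary follows immediately from \cref{main_intro_thm}.
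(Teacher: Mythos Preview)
Your approach is essentially the same as the paper's (see \cref{duality}): both reduce to $\V_E(M)=\V_E(M^\vee)$ via the isomorphism $\Ext_E(M,M)\cong\Ext_E(M^\vee,M^\vee)$ and then invoke the classification. The paper treats the compatibility with the $A$-action as evident rather than as ``the main obstacle''---since the $A$-action factors through the graded center of $\Df(E)$ and $(-)^\vee$ is a triangulated auto-equivalence sending $\id^M$ to $\id^{M^\vee}$, the induced (anti-)isomorphism automatically intertwines the maps from $A$, so the chain-level bookkeeping you outline is not needed.
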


For complete intersection rings this was established in \cite{Stevenson:2014b} via Stevenson's classification, and later also shown in \cite{LP} using a local-to-global principle from \cite{BIK15}. This also shows symmetric complexity holds over $E$; recall the complexity of a pair $M,N$ in $\Df(E)$ measures the polynomial rate of growth of the minimal number of generators of $\Ext_E^n(M,N)$ as $n\to \infty$. A consequence of \cref{intro_cor} (given in \cref{cor_sym_cx}) is that $\cx_E(M,N)=\cx_E(N,M)$, surprising (of course) because of the highly non-symmetric behavior of $\Ext(-,-).$
 The results above build on the philosophy from \cite{ABJA,Gu,Po,Tate} that the nice homological properties of a complete intersection ring are merely inherited from the well-behaved homological properties of the Koszul complex over a regular ring.

The proof of \cref{main_intro_thm}, presented in \cref{Section-thick subcategories}, depends on two major ingredients. The first is a \emph{curved} version of the BGG correspondence from Martin's thesis~\cite{Martin}. Namely, this establishes an equivalence of triangulated categories between $\Df(E)$ and the category of perfect curved dg $A$-modules having curvature $w$, denoted $\perf(A,w)$; see \cref{curved_dg}. The second ingredient is a classification of thick subcategories of $\perf(A,w)$, similar to the ones of Hopkins--Neeman--Thomason, which is given in \cref{classification_reg}. This classification follows from a more general classification result as described below. 

Let $A$ be a graded algebra that is commutative noetherian and concentrated in even degrees. Fix $w\in A_{-2}$, and we consider $\perf(A,w)$ as a tensor module over the tensor triangulated category $\perf(A,0)$ in the sense of \cite{Stevenson:2013a}; cf.\@ \cref{tensor_module}. 

\begin{introthm}
    \label{intro_thm_2}
In the notation above,  there are inclusion preserving bijections 
\[
\begin{tikzcd}
\biggm\{\begin{matrix}\text{Thick tensor submodules}\\ \text{of } \perf(A,w) \end{matrix} \biggm\}\arrow[r,shift left=0.8ex,"\sigma"]&\biggm\{\begin{matrix}\text{Specialization closed subsets}\\ \text{of } \supp(A,w)\end{matrix} \biggm\}\arrow[l,shift left=0.8ex,"\tau"]
\end{tikzcd}\,,
\]
where the inverse bijections are given by
\[
\sigma(\T)=\bigcup_{P\in \T}\supp_{(A,w)}(P)\quad\text{and}\quad \tau(\mathcal{V})=\{P\in \perf(A,w)\mid \supp_{(A,w)}(P)\subseteq \mathcal{V}\}\,.
\]
\end{introthm}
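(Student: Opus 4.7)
The plan is to prove this classification by adapting the Hopkins--Neeman--Thomason template to the tensor module setting, with $\perf(A,0)$ acting on $\perf(A,w)$. First I would verify that $\sigma$ and $\tau$ are well-defined: each $\supp_{(A,w)}(P)$ for $P\in\perf(A,w)$ is closed in $\supp(A,w)$, hence $\sigma(\T)$ is specialization closed; and $\tau(\mathcal{V})$ is evidently thick and closed under the $\perf(A,0)$-action once one establishes the tensor product formula
\[
\supp_{(A,w)}(X\ot_A P) \;=\; \supp_A(X)\cap\supp_{(A,w)}(P)
\]
for $X\in\perf(A,0)$ and $P\in\perf(A,w)$. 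I would prove this formula by a standard reduction to local rings together with the classical description of supports of tensor products of perfect complexes.

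Given well-definedness, the containments $\sigma\tau(\mathcal{V})\subseteq\mathcal{V}$ and $\T\subseteq\tau\sigma(\T)$ are formal. For the reverse of the first, fix $\p\in\mathcal{V}$; by the definition of $\supp(A,w)$ there is some $P_0\in\perf(A,w)$ with $\p\in\supp_{(A,w)}(P_0)$. Choosing a Koszul object $K(\p)\in\perf(A,0)$ on a finite generating set of $\p$ (so that $\supp_A K(\p)=\V(\p)$), the tensor product formula yields
\[
\supp_{(A,w)}\bigl(K(\p)\ot_A P_0\bigr) \;=\; \V(\p)\cap\supp_{(A,w)}(P_0) \;\subseteq\; \mathcal{V},
\]
since $\mathcal{V}$ is specialization closed and contains $\p$. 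This support evidently contains $\p$, so $\p\in\sigma\tau(\mathcal{V})$.

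The essentially only substantive direction is $\tau\sigma(\T)\subseteq\T$: given $N\in\perf(A,w)$ with $\supp_{(A,w)}(N)\subseteq\bigcup_{M\in\T}\supp_{(A,w)}(M)$, we must show $N\in\T$. My approach is to first establish the refined single-object version---for each $M\in\perf(A,w)$, the thick tensor submodule generated by $M$ equals $\{P\in\perf(A,w) : \supp_{(A,w)}(P)\subseteq\supp_{(A,w)}(M)\}$. Once this is proved, the theorem follows: by noetherianity of $\supp_{(A,w)}(N)$ there are finitely many $M_1,\dots,M_r\in\T$ whose supports cover $\supp_{(A,w)}(N)$, and then $N$ lies in the thick tensor submodule generated by $M_1\oplus\cdots\oplus M_r\in\T$.

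To prove the refined single-object claim, I would mimic the Hopkins--Neeman strategy: perform a noetherian induction on the closed set $\supp_{(A,w)}(M)$, using Koszul objects drawn from $\perf(A,0)$ to isolate contributions at individual primes, and invoke the classical fact that over a local ring any two nonzero perfect complexes generate the same thick subcategory. The main obstacle will be ensuring that these localization and generation arguments transfer faithfully to the curved setting; the hypothesis that $A$ is concentrated in even degrees is precisely what ensures that curvature interacts compatibly with tensoring by perfect complexes over $A$ at each localized stalk, so that the classical tools in $\perf(A)$ translate, through the tensor action, into statements about $\perf(A,w)$.
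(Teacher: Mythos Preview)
Your overall architecture matches the paper's proof of this theorem almost exactly: well-definedness via closedness of support and the tensor formula, the two easy containments, the Koszul-object trick for $\mathcal{V}\subseteq\sigma\tau(\mathcal{V})$, and the reduction of $\tau\sigma(\T)\subseteq\T$ to a single-object generation statement via noetherianity. All of that is the paper's route.

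The divergence, and the gap, is in the final step---proving that $\supp_{(A,w)}(P')\subseteq\supp_{(A,w)}(P)$ implies $P'\in\thick^{\odot}_{(A,w)}(P)$. The paper does \emph{not} use noetherian induction or any local-ring generation fact. Instead it proves a curved tensor-nilpotence lemma: if $\alpha\colon P'\to P$ in $\perf(A,w)$ satisfies $\alpha(\p)=0$ for all $\p\in\supp_{(A,w)}(P)$, then some $\alpha^{\otimes n}$ vanishes. This is reduced, via the adjunction $\Hom_{\perf(A,w)}(P',P)\cong\Hom_{\perf(A,0)}(A,\Hom_A(P',P))$, to the classical tensor-nilpotence in $\perf(A,0)$. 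One then completes $A\to\Hom_A(P,P)$ to a triangle $F\xrightarrow{\alpha}A\to\Hom_A(P,P)$, observes $\alpha(\p)=0$ on $\supp_{(A,w)}(P')$, deduces $\alpha^{\otimes n}\otimes\id^{P'}=0$, and hence $P'$ is a retract of $\cone(\alpha^{\otimes n})\otimes P'$, which lies in $\thick^{\odot}_{(A,w)}(P)$ by an octahedral filtration.

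Your proposed route is not obviously workable. The ``classical fact'' you invoke---that over a local ring any two nonzero perfect complexes generate the same thick subcategory---is a statement about $\perf(A_\p)$, not about $\perf(A_\p,w_\p)$. You gesture at transferring it through the tensor action, but give no mechanism. Concretely: knowing that $K(\p)\odot N$ and $K(\p)\odot M$ are both supported on $\mathcal{V}(\p)$ does not, by itself, put one in the thick $\odot$-closure of the other; you would still need a structural result about $\perf(A_\p,w_\p)$ (or about objects in $\perf(A,w)$ supported at a single prime) that you have not supplied. The even-degree hypothesis buys you that $\perf(\kappa(\p),w(\p))$ is either zero or semisimple, which is exactly what makes the tensor-nilpotence argument go through at residue fields---but it does not give you control over $\perf(A_\p,w_\p)$ when $A_\p$ is not a field. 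The paper's approach sidesteps localization entirely, working only with residue fields and the global tensor action.
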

Here the support of $P$ in $\perf(A,w)$, denoted $\supp_{(A,w)}(P),$ is the set of primes $\p$ where $P_\p\neq 0$ in $\perf(A_\p,w_\p)$. 
In the case $w$ itself is zero, \cref{intro_thm_2} provides a classification of the thick tensor ideals of $\perf(A,0)$; this is \cref{cor_tt}, and recovers the theorem of Hopkins--Neeman--Thomason for (graded) rings. Furthermore, this result can be regarded as the  algebraic analog of the classification of Hirano in \cite[Theorem~1.1]{Hirano}. 

More generally, the classification in \cref{intro_thm_2} establishes that $\perf(A,w)$ has a tensor module generator whenever $\supp(A,w)$ is closed (which holds, for example,  when $w=0$ or when $w$ is a regular element and $A$ is a smooth over a graded field); this is the content of \cref{generators}. When $A$ is regular, thick subcategories of $\perf(A,w)$ are thick tensor submodules and so the classification in \cref{intro_thm_2}, specializes to the bijection applied in \cref{main_result}, with the same maps $\sigma$ and $\tau$ defined above:
\[
\begin{tikzcd}
\biggm\{\begin{matrix}\text{Thick subcategories}\\ \text{of } \perf(A,w) \end{matrix} \biggm\}\arrow[r,shift left=0.8ex,"\sigma"]&\biggm\{\begin{matrix}\text{Specialization closed subsets}\\ \text{of } \supp(A,w)\end{matrix} \biggm\}\arrow[l,shift left=0.8ex,"\tau"]
\end{tikzcd}\,.
\]

This last result was also recently proven by Briggs in \cite[Theorem~A.11]{Briggs-Walker}; interestingly, the proof there deduces the result from a theorem that avoids the use of tensor actions, but instead requires certain internal Hom's to be perfect (the condition being satisfied when $A$ is regular, for instance); cf.\@ \cref{remark_ben}. 

In light of the recent works \cite{Lattices,Bartheletal,BIKPfiber}, it is natural to wonder if fiberwise classifications of thick subcategories can be used to achieve the classification in \cref{main_intro_thm},  especially since (in this setting) the fibers are exterior algebras and so the (classical) BGG correspondence could almost immediately apply. However, this strategy requires the use of certain ``big" categories (with a tensor action), and developing this setup will take us too far astray; see \cref{remark_ben_jian_josh}. Furthermore, it is worth commenting that part of the merit of this work is it avoids the use of infinite constructions to establish the classification in \cref{main_intro_thm}, similar to the strategy in \cite{CI} for artinian complete intersection rings. 

\section{(Curved) dg modules}
Throughout fix a base commutative ring $R$. We first review the necessary background on \textbf{differential graded} (=\textbf{dg}) algebras and modules.

\begin{chunk}
Recall that a dg $R$-algebra $A=\{A_i\}_{i\in \mathbb{Z}}$ is a graded $R$-algebra equipped with a degree $-1$ endomorphism $\del$ satisfying  $\del^2=0$ and the Leibniz rule:
\[
\del(a\cdot b)=\del(a)\cdot b+(-1)^{|a|}a\cdot \del(b)\,;
\]
when $a\neq 0$, the value $|a|$ is the unique integer $i$ such that $a$ belongs to $A_i.$  Similarly, a (left) graded $A$-module $M=\{M_i\}_{i\in \mathbb{Z}}$ is a (left) dg $A$-module if it is equipped with a square-zero degree $-1$ endomorphism $\del^M$ such that 
\[
\del^M(a\cdot m)=\del(a)\cdot m+(-1)^{|a|}a\cdot \del^M(m)\quad \text{for }a\in A\,, \ m\in M\,.
\]
For a dg $A$-module $M$, its homology $\h(M)=\{\h_i(M)\}_{i\in \mathbb{Z}}$ is a graded module over the graded $R$-algebra $\h(A)=\{\h_i(A)\}_{i\in \mathbb{Z}}$.
\end{chunk}

\begin{chunk}
Fix a dg $R$-algebra $A$, and let $M,N$ be dg $A$-modules; in this article, by a module we mean a \emph{left} module.  A morphism of dg $A$-modules $\alpha\colon M\to N$ is a morphism of $R$-complexes that is $A$-linear; it is a quasi-isomorphism (indicated with `$\simeq$') if the induced map of graded $\h(A)$-modules is an isomorphism. 
Let $\D(A)$ denote the derived category of dg $A$-modules, obtained by formally inverting quasi-isomorphisms of dg $A$-modules; cf.\@ \cite[Section~9]{Krause:2022}  or \cite[Section~2]{ABIM}. We will regard $\D(A)$ as a triangulated category in the standard way, with $\shift$ denoting the suspension functor. 
We write $\Df(A)$ for its full subcategory consisting of all dg $A$-modules $M$ where $\h(M)$ is a finitely generated graded $\h(A)$-module.
\end{chunk}

Let $\Cat$ be a triangulated category and recall that a full triangulated subcategory $\T$ of $\Cat$ is \textbf{thick} if it is closed under taking direct summands. Given an object $X$ in $\Cat$, then $\thick_{\Cat}(X)$ denotes the smallest thick subcategory $\Cat$ containing $X.$

\begin{example}
    Assume $A$ is a dg $R$-algebra with $\h(A)$ a noetherian graded algebra. In this case, $\Df(A)$ is a thick subcategory of $\D(A)$. 
\end{example}

\begin{example}\label{perf_dg}
    For a dg $R$-algebra $A$, we let $\perf(A)$ denote $\thick_{\D(A)}(A)$, and call an object $P$ in $\perf(A)$ a \textbf{perfect} dg $A$-module. These are the dg $A$-modules that are  quasi-isomorphic to a summand of a dg $A$-module $F$ where $F$ admits a filtration 
    \[
    0=F(-1)\subseteq F(0)\subseteq F(1)\subseteq \ldots \subseteq F(n)=F
    \]
   such that each $F(i)/F(i-1)$ is a finite sum of shifts of $A$; see \cite[Section~3]{ABIM}. Note that perfect dg $A$-modules are exactly the compact objects in $\D(A)$. That is to say, $M$ is a perfect dg $A$-module if and only if $
    \Hom_{\D(A)}(M,-)$ commutes with arbitrary direct sums; see \cite[Proposition~9.1.13]{Krause:2022}.
\end{example}

Next we recall the theory of curved dg modules over a curved algebra. For more on these objects see \cite{Posiselski:2011}. 

\begin{chunk}
    Let $A=\{A_i\}_{i\in \Z}$ be a graded commutative $R$-algebra, and fix $w\in A_{-2}$. A \textbf{curved dg $A$-module having curvature} $w$ is a graded $A$-module $M=\{M\}_{i\in \mathbb{Z}}$ equipped with a degree $-1$ map $\del^M\colon M\rightarrow M$  satisfying that 
\[
\del^M(a\cdot -)=(-1)^{|a|}a\del^M(-)\quad\text{and}\quad \del^M\del^M=w\cdot \id^M\,.
\] 
A morphism of curved dg $A$-modules is a morphism of graded $A$-modules $\alpha\colon M\to N$ such that $\alpha\del^M=\del^N\alpha$. 
\end{chunk}

\begin{example}
    Let $A$ be a graded commutative  $R$-algebra.  A curved dg $A$-module having curvature zero is exactly a dg $A$-module. 
\end{example}

\begin{example}
    A matrix factorization of a regular element $f$ in $R$, in the sense of Eisenbud~\cite{Eis}, can be regarded as a curved dg module. Indeed, let $A=Q[\chi^{\pm 1}]$ where $|\chi|=-2$ and consider a matrix factorization 
    \[
    P= P_0\xra{d_0}P_1\xra{d_1} P_0\,;
    \]
    that is, each $P_i$ is a projective $R$-module and $d_0d_1=f\cdot \id^{P_1}$ and $d_1d_0=f\cdot \id^{P_0}$. One can associate to $P$ the curved dg $A$-module $\tilde{P}=\shift A\otimes_Q P_0\oplus A\otimes_Q P_1$ with 
    \[
    \del^{\tilde{P}}=\begin{pmatrix}
        0 & \chi\otimes d_1 \\
        1\otimes d_0 & 0 
    \end{pmatrix}
    \]
   where the curvature of $\tilde{P}$ is $f\chi\in A_{-2}$.  Here $\tilde{P}$ can be viewed as the two-periodic sequence 
    \[
    \cdots \to P_0\chi^{-1}\xra{d_0}P_1\chi^{-1}\xra{d_1}P_0\xra{d_0}P_1\xra{d_1}P_0\chi\xra{d_0}P_1\chi \to \cdots \,.
    \]
\end{example}

\begin{chunk}\label{def_curved_dg}
Fix a graded commutative $R$-algebra $A$. 
We consider the category of curved dg $A$-modules with curvature $w$ whose underlying graded $A$-module is a finitely generated graded projective $A$-module; this category is denoted $\proj(A,w).$ This is a Frobenius exact category with injective/projective objects summands of $\cone(\id^P)$ with $P$ in $\proj(A,w)$, the obvious admissible short exact sequences, and suspension $\shift P$ the object in $\proj(A,w)$ satisfying 
\[
(\shift P)_n=P_{n-1}\quad\text{and}\quad a\cdot \shift p=(-1)^{|a|}\shift(a\cdot p)\quad\text{for all }a\in A\,, \ p\in P\,.
\]
Let $\perf(A,w)$ denote the stable category of $\proj(A,w)$, which then obtains a triangulated structure in the usual way, called the category of \textbf{perfect curved dg $A$-modules having curvature $w$}. Note that $\perf(A,w)$ is also the homotopy category of $\proj(A,w)$ regarded as a dg category. 
\end{chunk}

\begin{chunk}\label{tensor_actions}
       Fix $A=\{A_i\}_{i\in\mathbb{Z}}$,  a graded commutative noetherian ring. For $v,w\in A_{-2}$ there is a pairing 
       \[
       \proj(A,v)\times \proj(A,w)\to \proj(A,v+w)
       \]
       assigning $(P,P')$ to the curved dg $A$-module with curvature $v+w$ given by 
       \[
       P\otimes_A P'\quad \text{with}\quad  \del^{P\otimes_A P'}(x\otimes y)=\del^P(x)\otimes y+(-1)^{|x|}x\otimes \del^{P'}(y)\quad\text{for } x\in P,y\in P'\,.
       \]
       As $-\otimes_A-$ is bi-exact on projective $A$-modules and bi-additive, it follows that there are induced pairings 
       \[
       \perf(A,v)\times \perf(A,w)\to \perf(A,v+w)\,,
       \]
       that are bi-exact, bi-additive and suitably compatible. 
       \end{chunk}

\begin{chunk}
    Paralleling the discussion above in \cref{tensor_actions}, there are also bi-exact, bi-additive functors \[\perf(A,v)^{\text{op}}\times \perf(A,w)\to \perf(A,w-v)\]
       where $(P,P')\mapsto \Hom_A(P,P')$ with the usual Hom-differential 
       \[\del^{\Hom_A(P,P')}(f)=\del^{P'}f-(-1)^{|f|}f\del^P\,.\]
       For  $P$ in $\perf(A,v)$, we write $P^\vee=\Hom_A(P,A)$  which belongs to $\perf(A,-v)$. 
\end{chunk}

\begin{remark}\label{dg_cat}
     Note that $\perf(A,w)$ is the homotopy category of $\proj(A,w)$ regarded as a pre-triangulated dg category. In particular, \[
     \Hom_{\perf(A,w)}(P,P')=\h(\Hom_A(P,P'))=\frac{\ker \del^{\Hom_A(P,P')}}{\Ima \del^{\Hom_A(P,P')}}
     \]
     for any pair $P,P'$ in $\perf(A,w)$. Furthermore, for $P$ in $\perf(A,v)$  and $P'$ in $\perf(A,w)$, then there are natural isomorphisms:
     \begin{enumerate}
         \item $\Hom_{\perf(A,v+w)}(P\otimes_A P', -)\cong \Hom_{\perf(A,w)}(P', \Hom_A(P,-))$, and
         \item $\Hom_A(-,P'\otimes_A P)\cong \Hom_A(-,P')\otimes_A P$ in $\perf(A,0)$.
     \end{enumerate}
\end{remark}

We will call a graded commutative noetherian ring \textbf{regular} if every finitely generated graded module has finite projective dimension. 

\begin{proposition}\label{p:perf}
    Let $A$ be a graded commutative noetherian ring. The obvious functor $\proj(A,0)\to \D(A)$ induces an exact functor $\perf(A,0)\to \D(A)$; if $A$ is regular or concentrated in nonnegative degrees, then the functor restricts to an equivalence of triangulated categories $\Phi\colon \perf(A,0)\to \perf(A)$. In particular, if $A$ is regular or concentrated in nonnegative degrees, then \[\thick_{\perf(A,0)}(A)=\perf(A,0)\,.\] 
\end{proposition}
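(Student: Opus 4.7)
The plan proceeds in three steps: first, define the exact functor $\Phi\colon\perf(A,0)\to \D(A)$; second, show $\Phi$ is fully faithful with essential image $\perf(A)$ under either hypothesis; third, deduce the ``in particular'' from the equivalence. For the first step, any object $P$ of $\proj(A,0)$ satisfies $\del^P\del^P=0\cdot\id^P=0$ and is therefore a dg $A$-module; the forgetful assignment to $\D(A)$ is additive, sends admissible short exact sequences (which are split in the underlying graded category) to distinguished triangles in $\D(A)$, and kills contractible objects (summands of $\cone(\id^P)$) via the standard null-homotopy. This induces the required exact functor $\Phi$.

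In the nonnegatively graded case, I would produce a finite filtration of every $P\in\proj(A,0)$ by sub-dg-modules whose successive quotients are shifts of $A$. Reducing first to the case that the underlying graded module of $P$ is free, choose a homogeneous basis $e_1,\ldots,e_n$ ordered by non-decreasing degree; writing $\del(e_k)=\sum_j a_{kj}e_j$ with $a_{kj}\in A_{|e_k|-1-|e_j|}$, the vanishing $A_{<0}=0$ forces $a_{kj}=0$ unless $|e_j|<|e_k|$. Hence the sub-$A$-module $P^{(k)}$ spanned by $e_1,\ldots,e_k$ is a sub-dg-module of $P$ with $P^{(k)}/P^{(k-1)}\cong \shift^{|e_k|}A$, and the projective case follows by passing to summands. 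This simultaneously places $P$ in $\thick_{\perf(A,0)}(A)$ and shows $P$ is K-projective over $A$, being an iterated extension of the K-projective objects $\shift^{d_k}A$. Consequently $\Hom_{\perf(A,0)}(P,Q)=\h_0\Hom_A(P,Q)\cong \h_0\RHom_A(P,Q)=\Hom_{\D(A)}(\Phi P,\Phi Q)$, so $\Phi$ is fully faithful with essential image $\thick_{\D(A)}(A)=\perf(A)$.

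In the regular case this clean degree-based filtration can fail (for example over $A=k[t^{\pm 1}]$ a differential can couple all generators of $P$), so I would instead leverage the Auslander--Buchsbaum--Serre-type fact that regularity forces $\Df(A)=\perf(A)$. Since the underlying graded module of $P$ is finitely generated over noetherian $A$, so is $\h(P)$, whence $\Phi(P)\in\Df(A)=\perf(A)$. For essential surjectivity, every $X\in\perf(A)$ has a bounded complex of finitely generated projective representatives, which naturally lies in $\proj(A,0)$ and is K-projective there. Full-faithfulness then follows by a d\'evissage once one establishes that an acyclic object of $\proj(A,0)$ is contractible---a splitting argument using the projectivity of the underlying graded module together with finite projective dimension. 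The main obstacle here is this contractibility step, since without nonnegativity one loses the clean degree-based splitting and must instead argue via resolutions. Finally, the equivalence $\Phi\colon\perf(A,0)\simeq\perf(A)=\thick_{\D(A)}(A)$ immediately transports $A\mapsto A$ to give $\thick_{\perf(A,0)}(A)=\perf(A,0)$.
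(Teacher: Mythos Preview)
Your argument is correct and structurally matches the paper's; the difference lies in which step each side argues directly and which is delegated to a citation. In the nonnegative case you spell out the degree-filtration that the paper obtains by citing \cite[Proposition~4.4]{ABIM}. In the regular case the paper takes the reverse tack: rather than isolating contractibility of acyclics, it invokes the fact (citing \cite[Proposition~1.2.8]{Po}) that over a regular graded ring every dg module with projective underlying graded module is semiprojective. This gives full faithfulness at once, and the image lands in $\perf(A)$ via ``semiprojective $+$ finitely generated $\Rightarrow$ compact'' rather than via your route $\Df(A)=\perf(A)$.

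Your self-identified obstacle is genuine but your hint already resolves it. For an acyclic $P\in\proj(A,0)$, the cycle submodule $Z=\ker\del^P$ sits in the exact sequence $0\to Z\to P\to\Sigma^{-1}Z\to 0$, so $Z$ is a first syzygy of a shift of itself; regularity forces $\pd_A Z<\infty$, whence $\pd_A Z=0$, the sequence splits, and a contracting homotopy follows. One refinement to your d\'evissage: contractibility of acyclics \emph{within} $\proj(A,0)$ does not by itself give K-projectivity against arbitrary acyclic targets. What makes your argument go through is that every $P\in\proj(A,0)$, having $\Phi(P)\in\perf(A)$, receives an honest quasi-isomorphism from a bounded (hence K-projective) object $\tilde P\in\proj(A,0)$; your contractibility step then upgrades $\tilde P\to P$ to a homotopy equivalence, and full faithfulness follows from $[P,Q]\cong[\tilde P,Q]\cong\Hom_{\D(A)}(P,Q)$.
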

\begin{proof}
Note that a curved dg $A$-module with curvature 0 is exactly a dg $A$-module. Also, the suspensions are the same on $\proj(A,0)$ and $\D(A)$, exact sequences in $\proj(A,0)$ are sent to triangles in $\D(A)$, and phantom maps in $\proj(A,0)$ are null-homotopic (in the classical sense), and hence, sent to zero maps in $\D(A)$. Therefore, we obtain the triangulated functor  $ \perf(A,0)\to \D(A)$. 

When $A$ is concentrated in nonnegative degrees, the fact that $\Phi$ is an equivalence follows from \cite[Proposition~4.4]{ABIM}.

Now assume that $A$ is regular. In this setting, any graded projective $P$ is semiprojective (in the usual sense that $\Hom_A(P,-)$ is exact and preserves quasi-isomorphisms); cf. \@ \cite[Proposition 1.2.8]{Po}.  Furthermore, as $A$ is noetherian any semiprojective dg $A$-module $P$, that is finitely generated as a graded $A$-module, must be a perfect dg $A$-module; indeed, $\Hom_{\D(A)}(P,-)$ commutes with arbitrary direct sums in this case and so by \cref{perf_dg} it follows that $P$ is in $\perf(A)$. 

As a consequence of the remarks above,  the image of each object $\perf(A,0)$ under this functor is in fact a perfect dg $A$-module. Thus, we obtain the induced exact functor $\Phi\colon \perf(A,0)\to \perf(A)$. It is clear $\Phi$ is essentially surjective; see \cref{perf_dg}. Finally, a pair of perfect dg $A$-modules are quasi-isomorphic if and only if they are homotopy equivalent, and hence, $\Phi$ is an equivalence. Furthermore, the equality now follows immediately, as $\Phi(A)=A$ and $\perf(A)=\thick_{\D(A)}(A)$.
\end{proof}
\begin{remark}
    In general, the image of the functor $\perf(A,0)\rightarrow \D(A)$ is not contained in $\perf(A)$; see Example \ref{perfct curved not perfect}.
\end{remark}

\section{Tensor modules and support}

Fix a graded commutative noetherian ring $A=\{A_i\}_{i\in\mathbb{Z}}$. 
First we recall how $\perf(A,0)$ is a tensor triangulated category~\cite{Balmer:2005}. 

\begin{example}\label{e_tt}
    By \cref{tensor_actions}, in the notation of \cref{def_curved_dg}, $\perf(A,0)$ is a tensor triangulated category. Explicitly, the tensor product $P\otimes_A P'$ is the underlying tensor product of projective graded $A$-modules  $P\otimes_A P'$ with \[\del^{P\otimes_A P'}(x\otimes y)=\del^P(x)\otimes y+(-1)^{|x|}x\otimes \del^{P'}(y)\] for $x\in P$ and $y\in P'$. Note that $A$ is the tensor unit of $\perf(A,0)$. 
    
    Now assume that $A$ is regular. The equivalence $\perf(A,0)\equiv \perf(A)$ of triangulated categories from \cref{p:perf} clearly upgrades to one of tensor-triangulated categories, where the category on the right is given the derived tensor product $-\lotimes_A-$. Therefore, by \cref{p:perf}, in this setting the tensor unit $A$ in $\perf(A,0)$ is a \emph{classical generator} of $\perf(A,0)$ in the sense that 
    \[\thick_{\perf(A,0)}(A)=\perf(A,0)\,.\] 
\end{example}

\begin{chunk}\label{tensor_module}
Fix a tensor triangulated category  $(\T,\otimes,\mathsf{1})$, and
 suppose that $\T$  acts on a triangulated category $\sf K$ in the sense of Stevenson~\cite[Section~3]{Stevenson:2013a}; see, also, \cite{Stevenson:2014b,Stevenson:2018a}. Roughly speaking, if we write $\odot$ to denote this action, then $\odot\colon \T\times \sf K\to \sf K$ is a bi-exact, bi-additive functor that satisfies certain compatibility relations up to natural isomorphisms like associativity and
\[
\mathsf{1}\odot x\simeq x\quad\text{for all }x\text{ in }\mathsf{K}\,.
\]
We call $\sf K$ a tensor module of $\T$, and tensor submodules of $\sf K$ are exactly the thick subcategories of $\sf K$ that are closed under the $\T$-action. Furthermore, if $\T=\thick_{\T}(\mathsf{1})$ then every thick subcategory of $\sf K$ is necessarily a tensor submodule of $\sf K$. 

Given an object $x$ (or a set of objects) in $\sf K$, we denote $\thick^{\odot}_{\sf K}(x)$ for the smallest thick subcategory of $\sf K$ that is closed under the action of $\T$. Therefore, whenever $\T=\thick_{\T}(\mathsf{1})$ one has $\thick^{\odot}_{\sf K}(x)=\thick_{\sf K}(x)$ for all $x$ in $\sf K$. 
\end{chunk}

\begin{example}
Fix $w\in A_{-2}$, and recall that $\perf(A,0)$ is a tensor triangulated category by \cref{e_tt}. By \cref{tensor_actions}, we obtain that $\perf(A,w)$ is a tensor $\perf(A,0)$-module. We let $\odot$ denote the action of $\perf(A,0)$ on $\perf(A,w)$, and write 
       \[
       \thick_{(A,w)}^{\odot}(P)=\thick_{\perf(A,w)}^{\odot}(P)
       \]
       where $P$ is in $\perf(A,w)$ (or is a set of objects from $\perf(A,w)$). 
\end{example}

\begin{example}\label{perfct curved not perfect}
     Assume $A=k[y,z]/(y^2,yz,z^2)$, where $|y|=0, |z|=-2$, and $k$ is a field. Let $M$ denote the dg $A$-module whose underlying graded $A$-module is
 $A\oplus \shift^{-1}A$ and differential given by $\del^M=\begin{pmatrix}
         0 & z\\
y& 0
     \end{pmatrix}
     $.
    By definition, $M$ is a perfect curved dg $A$-module with curvature $0$, i.e., $M$ is in  $\perf(A,0)$. However, as a dg $A$-module, $M$ is not perfect; see \cite[Example 4.12.2]{ABIM}.

   Moreover, $\thick_{(A,0)}(A)\neq \thick_{(A,0)}^{\odot}(A)$, as $\thick_{(A,0)}(A)$ is not a thick tensor $\perf(A,0)$-submodule of $\perf(A,0)$.
This follows from the fact that 
$M\otimes_A A\cong M$ does not belong to $\thick_{\perf(A,0)}(A):$ indeed, if $M$ is in  $\thick_{(A,0)}(A)$,  
then, by applying the exact functor $\perf(A,0)\rightarrow \D(A)$ it follow that $M$ belongs to $\thick_{\D(A)}(A)$; see \cite[Proposition 3.7]{ABIM}. This contradicts with that $M$ is not a perfect dg $A$-module.
\end{example}

For the remainder of this section, fix $w\in A_{-2}$. 

\begin{chunk}\label{curved_spec}
Let $\spec(A)$ denote the collection of homogeneous primes of $A$. For a homogeneous prime $\p$ in $\spec(A)$, we write $\kappa(\p)$ for the graded field $A_\p/\p A_\p$ and set $w(\p)=\kappa(\p)\otimes_A w$ in $\kappa(\p)$. There is an exact functor 
\[
\perf(A,w)\to \perf(\kappa(\p),w(\p))
\]
induced by assigning to each $P$ in $\perf(A,w)$ the perfect curved dg $\kappa(\p)$-module $P(\p)\coloneqq \kappa(\p)\otimes_A P=\kappa(\p)\odot P$ in $\perf(\kappa(\p),w(\p))$.

When $w\in \p$, then $w(\p)=0$ and since $\kappa(\p)$ is a graded field it follows that \[\perf(\kappa(\p),w(\p))=\perf(\kappa(\p),0)\equiv \perf(\kappa(p))\,;\]
cf.\@ \cref{p:perf}. In particular, every object in $\perf(\kappa(\p),w(\p))$ is a direct sum of shifts of copies of $\kappa(\p)$.

If $w\notin \p$, then $w(\p)$ is a unit  in the graded field $\kappa(\p)$.  In the case that $\kappa(\p)$ is concentrated in even degrees it follows that $\perf(\kappa(\p),w(\p))\equiv 0.$  Indeed, any object $T$ in $\perf(\kappa(\p),w(\p))$ is of the form 
\[
T=T_{\textrm{even}}\oplus T_{\textrm{odd}} \quad\text{and}\quad \del^T=\begin{pmatrix} 0 & d_0\\
    d_1 & 0
    \end{pmatrix}
    \]
    with $d_0d_1=w(\p)\cdot \id^{T_{\textrm{even}}}$ and $d_1d_0=w(\p)\cdot \id^{T_{\textrm{odd}}}$. Since $w(\p)$ is a unit and $\kappa(\p)$ is concentrated in even degrees, $T$ is zero in $\perf(\kappa(\p),w(\p))$ as 
    \[
    \begin{pmatrix} 0 &  d_0w(\p)^{-1}\\
    0 & 0
    \end{pmatrix} 
    \]
    is a null-homotopy for $\id^T$. 
\end{chunk}

\begin{definition}
    Let $P$ be in $\perf(A,w)$. Define the \textbf{support} of $P$ to be 
    \[
    \supp_{(A,w)}(P)=\{\p\in \spec(A): P_\p\neq 0\text{ in }\perf(A_\p,w_\p)\}\,.
    \]
\end{definition}

In the previous definition, $P_\p$ is the homogeneous localization of the graded $A$-module $P$ at the homogeneous prime ideal $\p$; see \cite[Section~1.5]{Bruns/Herzog:1998}. Furthermore, when we write $\supp_A(M)$, for a graded $A$-module $M$, we mean the ordinary support of $M$. That is to say,
    \[
    \supp_A(M)=\{\p\in \spec(A): M_\p\neq 0\}\,.
    \]
By Nakayama's lemma, one also has the following interpretation of support of a perfect curved dg $A$-module. 
    
\begin{lemma}\label{fg_closed}
    For $P$ in $\perf(A,w)$  one has \[\{\p\in \spec(A): P(\p)\neq 0\text{ in }\perf(\kappa(\p),w(\p))\}=\supp_{(A,w)}(P)=\supp_A(A\cdot \id^P)\,.
    \]
    In particular, $\supp_{(A,w)}(P)$ is a closed subset of $\spec(A)$, and $\supp_{(A,w)}(P)=\varnothing$ if and only if $P=0$ in $\perf(A,w)$.
\end{lemma}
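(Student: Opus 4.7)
The plan is to first establish the right-hand equality $\supp_{(A,w)}(P)=\supp_A(A\cdot \id^P)$---from which the closedness of $\supp_{(A,w)}(P)$ and the vanishing criterion follow formally---and then tackle the more delicate left-hand equality.

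For the right-hand equality I would use that $P$ is a finitely generated projective graded $A$-module, so $\End_A(P)\cong P\otimes_A P^\vee$ is finitely generated as a graded $A$-module and localization therefore commutes with its degree-zero homology: $\h_0(\End_A(P))_\p=\h_0(\End_{A_\p}(P_\p))$. By \cref{dg_cat}, $P_\p=0$ in $\perf(A_\p,w_\p)$ is equivalent to $[\id^{P_\p}]=0$ in this ring, which via the localization identification happens precisely when there exists $a\notin\p$ with $a\cdot \id^P=0$ in $\h_0(\End_A(P))$, i.e.\@ $\ann_A(\id^P)\not\subseteq\p$. Thus $\supp_{(A,w)}(P)=\V(\ann_A(\id^P))=\supp_A(A\cdot\id^P)$, which is a closed subset of $\spec(A)$; and $\supp_{(A,w)}(P)=\varnothing$ iff $A\cdot\id^P=0$ iff $\id^P=0$ in $\h_0(\End_A(P))$ iff $P=0$ in $\perf(A,w)$.

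For the left-hand equality $\{\p:P(\p)\neq 0\}=\supp_{(A,w)}(P)$, the inclusion `$\subseteq$' is immediate from $P(\p)=\kappa(\p)\odot P_\p$. The reverse inclusion reduces to showing: $P(\p)=0\Rightarrow P_\p=0$. Here I would set $\m=\p A_\p$ and $E=\End_{A_\p}(P_\p)$, and observe that finite projectivity of $P_\p$ yields $E\otimes_{A_\p}\kappa(\p)\cong \End_{\kappa(\p)}(P(\p))$. If $P(\p)=0$ then $\id^{P(\p)}$ is null-homotopic, and right-multiplication by any such homotopy produces a contracting homotopy for the whole dg $\kappa(\p)$-algebra $\bar E:=E/\m E$, showing $\bar E$ is acyclic.

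The main obstacle is propagating this acyclicity from $\bar E$ back to a null-homotopy of $\id^{P_\p}$ in $E$ itself. Using the standing assumption that $A$ is concentrated in even degrees, the differential $D$ on $E$ is $A_\p$-linear, so each $\m^i E$ is a sub-dg-module of $E$; graded freeness of $E$ over the graded local ring $A_\p$ then gives identifications $\m^{i-1}E/\m^i E\cong (\m^{i-1}/\m^i)\otimes_{\kappa(\p)}\bar E$ as dg $\kappa(\p)$-modules. Since $\kappa(\p)$ is a graded field and $\bar E$ is acyclic, each subquotient is acyclic, and iterating long exact sequences in homology yields isomorphisms $\h(\m^i E)\xra{\sim}\h(E)$ via the canonical inclusions. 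Consequently $[\id^{P_\p}]\in \h_0(E)$ admits a representative in $\m^i E$ for every $i\geq 0$, placing $\id^{P_\p}$ in $\bigcap_i(\m^i E+D(E))$. Reducing modulo $D(E)$ and applying the graded Krull intersection theorem to the finitely generated graded $A_\p$-module $E/D(E)$ forces $\id^{P_\p}\in D(E)$; hence $\id^{P_\p}$ is a boundary and $P_\p=0$.
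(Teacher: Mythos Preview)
Your argument for the right-hand equality matches the paper's (both reduce to the compatibility of $\id^P$ with localization). For the harder implication $P(\p)=0\Rightarrow P_\p=0$, however, you take a genuinely different route. The paper simply lifts a contracting homotopy $\beta$ for $\id^{P(\p)}$ to a degree-one map $\alpha\colon P_\p\to P_\p$ and observes that $\alpha'\coloneqq\del^{P_\p}\alpha+\alpha\del^{P_\p}$ is a morphism of curved dg $A_\p$-modules that is null-homotopic by construction yet satisfies $\alpha'(\p)=\id^{P(\p)}$; Nakayama's lemma then forces $\alpha'$ to be an automorphism of $P_\p$, and a null-homotopic automorphism gives $P_\p=0$ in $\perf(A_\p,w_\p)$. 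This one-step Nakayama trick replaces your entire $\m$-adic filtration and Krull intersection argument. Your approach is nonetheless correct, with one caveat: you appeal to a ``standing assumption that $A$ is concentrated in even degrees,'' but no such assumption is in force for this lemma. Fortunately you do not actually need literal $A_\p$-linearity of $D$: the Leibniz rule $D(af)=(-1)^{|a|}aD(f)$ already guarantees that each $\m^iE$ is $D$-stable and that each subquotient $\m^{i-1}E/\m^iE$ is a direct sum of shifts of $\bar E$, which is all your filtration argument requires.
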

\begin{proof}
    As $\id^P_\p\cong \id^{P_\p}$ the second equality holds. 
    Also, if $P_\p=0$, then $P(\p)=0$. Therefore, it suffices to show if $P(\p)=0$, then $P_\p=0$. 

    To this end, there exists $\beta\colon P(\p)\to P(\p)$ a degree 1 map such that 
\[\del^{P(\p)}\beta+\beta\del^{P(\p)}=\id^{P(\p)}\,.
    \]
    Lift $\beta$ to a degree 1 map $\alpha\colon P_\p\to P_\p$ satisfying $\alpha(\p)=\beta$. Now observe that 
    \[
   \alpha'\coloneqq \alpha\del^{P_\p}+ \del^{P_\p}\alpha\colon P_\p\to P_\p
    \]
    is an endomorphism of the curved dg $A_\p$-module $P_\p,$ and by construction 
    \[
    \alpha'= 0\quad\text{in }\perf(A_\p,w_\p)\,.
    \]
    However, $\alpha'(\p)=\id^{P(\p)}$ and so by Nakayama's lemma it follows that $\alpha'$ is an automorphism of $P_\p.$ Combining this with the fact that $\alpha'=0$ in $\perf(A_\p,w_\p)$, it follows that $P_\p=0$ in $\perf(A_\p,w_\p)$.
\end{proof}

\begin{chunk}    
    \label{support}
    The following are elementary properties of support. 
    \begin{enumerate}
        \item $\supp_{(A,w)}(P)=\supp_{(A,w)}(\shift P)$;
    \item $\supp_{(A,w)}(P\oplus P')=\supp_{(A,w)}(P)\cup \supp_{(A,w)}(P')$;
    \item if $P\to P'\to P''\to \shift P$ is an exact triangle in $\perf(A,w)$, then 
    \[
    \supp_{(A,w)}(P)\subseteq \supp_{(A,w)}(P')\cup \supp_{(A,w)}(P'')\,;
    \]
    \item for any $P$ in $\perf(A,w)$ and $P'$ in $\perf(A,v)$, there is a containment \[\supp_{(A,w+v)}(P\otimes_A P')\subseteq \supp_{(A,w)}(P)\cap \supp_{(A,v)}(P')\,.\]
    \end{enumerate}
        By \cref{curved_spec}, when $A$ is concentrated in even degrees it follows that \[\supp_{(A,w)}(P)\subseteq \mathcal{V}(w)=\{\p\in \spec(A): w\in \p\}\,.\] Furthermore, in this setting, it also follows that equality holds in (4), above. Indeed,
    \[
    P\otimes_A P' (\p) \cong P(\p)\otimes_{\kappa(\p)}P'(\p)
    \]
    in $\perf(\kappa(\p),0)$ and so 
    \[
    P(\p)\otimes_{\kappa(\p)}P'(\p)= 0\iff P(\p)=0 \ \text{ or } \ P'(\p)=0\,.
    \]
    Therefore, it suffices to refer to \cref{fg_closed}. 
    \end{chunk}

The next lemma is a curved version of the tensor-nilpotence theorem of Hopkins--Neeman; see \cite[Theorem~10]{Hopkins} and \cite[Theorem~1.1]{Neeman92}; see also \cite[Theorem~3.3]{CI}. The version we state here is inspired by (and is a slight generalization of) a version of the tensor-nilpotence theorem for matrix factorizations due to Hirano~\cite[Lemma~4.2]{Hirano}. 

\begin{lemma}[Curved tensor-nilpotence]\label{l_tensor_nilpotence}
Assume $A$ is a graded commutative noetherian ring, with $w\in A_{-2}$, and fix $P,P'$ in $\perf(A,w)$.  If $\alpha\colon P'\to P$ is a map in $\perf(A,w)$ with $\alpha(\p)=0$ for all $\p\in \supp_{(A,w)}(P)$, then there exists a positive integer $n$ such that $\alpha^{\otimes_A n}=0$ in $\perf(A,n\cdot w)$.
\end{lemma}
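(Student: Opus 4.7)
The plan is to reduce the statement to the classical Hopkins--Neeman tensor-nilpotence theorem inside the tt-category $\perf(A,0)$ by packaging $\alpha$ as a class in an internal Hom object. Set $N \coloneqq \Hom_A(P', P)$. Since both $P$ and $P'$ have curvature $w$ and $A$ is graded commutative, the Hom-differential squares to zero (the two contributions $w\cdot\id$ cancel), so $N$ is an honest perfect dg $A$-module in $\perf(A, 0)$; its underlying graded module is the finitely generated projective $A$-module $P \otimes_A (P')^\vee$. The cycle $\alpha$ represents a morphism $f_\alpha\colon A \to N$ in $\perf(A, 0)$ via $H_0(N) \cong \Hom_{\perf(A,0)}(A, N)$.

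Next, using the canonical iso $\Hom_A(X, Y) \otimes_A \Hom_A(X', Y') \cong \Hom_A(X \otimes_A X', Y \otimes_A Y')$ valid for finitely generated projective $X, X'$, iteration gives an iso $N^{\otimes_A n} \cong \Hom_A((P')^{\otimes_A n}, P^{\otimes_A n})$ of perfect curved dg $A$-modules with zero curvature. Under this iso $\alpha^{\otimes_A n}$ represents the tensor power $f_\alpha^{\otimes n}$, and so the target conclusion $\alpha^{\otimes_A n} = 0$ in $\perf(A, nw)$ is equivalent to $f_\alpha^{\otimes n} = 0$ in $\perf(A, 0)$.

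Then I would verify the hypothesis of tensor-nilpotence for $f_\alpha$. By \cref{fg_closed}, if $\p \notin \supp_{(A,w)}(P)$ then $P(\p) = 0$, hence $N(\p) = \Hom_{\kappa(\p)}(P'(\p), P(\p)) = 0$ and $f_\alpha(\p) = 0$ tautologically; combined with the given hypothesis, $f_\alpha(\p) = 0$ in $\Hom_{\perf(\kappa(\p),0)}(\kappa(\p), N(\p))$ for \emph{every} $\p \in \spec(A)$. The graded version of the Hopkins--Neeman tensor-nilpotence theorem applied to $f_\alpha$ in $\perf(A, 0)$ then yields $f_\alpha^{\otimes n} = 0$ for some $n \ge 1$, which translates back to $\alpha^{\otimes_A n} = 0$ in $\perf(A, nw)$.

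The main obstacle is the appeal to tensor-nilpotence in $\perf(A, 0)$ for an arbitrary commutative noetherian \emph{graded} ring $A$. In the cases actually needed later (where $A$ is either regular or, after a sign flip on the grading, non-negatively graded), \cref{p:perf} identifies $\perf(A, 0)$ with $\perf(A)$ as tt-categories and the classical Hopkins--Neeman--Thomason theorem applies verbatim; in full generality one must replicate the Noetherian-induction argument of \cite{Hopkins, Neeman92} on the homogeneous spectrum of $A$, which goes through unchanged since the key inputs (finiteness of support, behaviour of $-\otimes_A\kappa(\p)$, compactness of $\spec(A)$) all have graded analogues.
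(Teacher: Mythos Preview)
Your proof is correct and follows essentially the same route as the paper: both package $\alpha$ via the adjunction isomorphism $\Hom_{\perf(A,w)}(P',P)\cong\Hom_{\perf(A,0)}(A,\Hom_A(P',P))$, identify the tensor power $\alpha^{\otimes n}$ with the tensor power of the resulting map $A\to\Hom_A(P',P)$, and then invoke tensor-nilpotence in $\perf(A,0)$. You are in fact a bit more explicit than the paper in verifying that $f_\alpha(\p)=0$ for \emph{all} $\p$ (not merely those in $\supp_{(A,w)}(P)$), which is needed for the Hopkins--Neeman input; the paper leaves this implicit. For the ``main obstacle'' you flag, the paper resolves it by citing \cite[Theorem~3.3]{CI} and observing that the argument there (specifically the portion after their ``Step~2'') is already carried out for objects of $\perf(A,0)$ rather than $\perf(A)$, so no separate graded adaptation is required.
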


\begin{proof}
    By \cref{dg_cat}, there is a natural isomorphism 
    \[
    \Phi\colon \Hom_{\perf(A,w)}(P',P)\xra{\cong} \Hom_{\perf(A,0)}(A,\Hom_A(P',P))
    \]
    where $\Phi(\alpha^{\otimes_A n})$ is identified with $\Phi(\alpha)^{\otimes_An}.$ Therefore, we have reduced to the case that $w=0$ and $P'=A$. Now it remains to note that the desired tensor nilpotence was shown to hold in $\perf(A)$ in \cite[Theorem~3.3]{CI}, but the proof shows tensor nilpotence holds in $\perf(A,0)$; namely, the argument in loc.\@ cit.\@  following their ``\emph{Step 2}" holds for objects in $\perf(A,0)$, and establishes the desired statement. 
\end{proof}

\begin{lemma}
\label{l_nilpotent_map}
Let $\alpha\colon F\to G$ be in $\perf(A,0)$ and $P$ in $\perf(A,w)$. If $\alpha(\p)=0$ for all $\p\in \supp_{(A,w)}(P)$, then there exists $n>0$ such that $\alpha^{\otimes n}\otimes \id^P=0$ in $\perf(A,w).$
\end{lemma}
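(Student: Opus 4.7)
The plan is to reduce the claim to \cref{l_tensor_nilpotence} applied inside $\perf(A,0)$, by moving the problem there via internal Hom and then using the dg algebra structure on $\End_A(P)$ to eliminate an unwanted tensor power. Set $T\coloneqq \Hom_A(F,G)$ and $S\coloneqq \End_A(P)=P^\vee\otimes_A P$, both in $\perf(A,0)$. Let $\tilde\alpha\colon A\to T$ correspond to $\alpha$ under the natural isomorphism of \cref{dg_cat}, and let $\eta\coloneqq \widetilde{\id^P}\colon A\to S$ be the unit of the dg algebra $S$. Since $F^{\otimes n}$ and $P$ are perfect, one has $\Hom_A(F^{\otimes n}\otimes P,G^{\otimes n}\otimes P)\cong T^{\otimes n}\otimes S$ in $\perf(A,0)$, and under this identification the map $\alpha^{\otimes n}\otimes \id^P$ corresponds to $\tilde\alpha^{\otimes n}\otimes \eta\colon A\to T^{\otimes n}\otimes S$. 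It therefore suffices to show this latter map vanishes for some $n$.

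To this end, I consider $\mu\coloneqq \tilde\alpha\otimes \eta\colon A\to T\otimes S$ in $\perf(A,0)$ and check that $\mu(\p)=0$ for every $\p\in \spec(A)$: when $\p\in \supp_{(A,w)}(P)$ the hypothesis gives $\alpha(\p)=0$, hence $\tilde\alpha(\p)=0$; when $\p\notin \supp_{(A,w)}(P)$, \cref{fg_closed} forces $P(\p)=0$, so $S(\p)=0$ and $\eta(\p)=0$. Applying \cref{l_tensor_nilpotence} with curvature zero to $\mu$ produces $n>0$ with $\mu^{\otimes n}=0$ in $\perf(A,0)$, and the symmetric monoidal structure identifies $\mu^{\otimes n}$ with $\tilde\alpha^{\otimes n}\otimes \eta^{\otimes n}$.

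The main obstacle is the mismatch between the vanishing just obtained, which features $\eta^{\otimes n}$, and the vanishing we want, which features a single copy of $\eta$. The resolution exploits that $S$ is a dg algebra with unit $\eta$, so its iterated multiplication $m^{(n)}\colon S^{\otimes n}\to S$ satisfies $m^{(n)}\circ \eta^{\otimes n}=\eta$. Post-composing $\mu^{\otimes n}=0$ with $\id^{T^{\otimes n}}\otimes m^{(n)}$ then yields
\[
\tilde\alpha^{\otimes n}\otimes \eta=(\id^{T^{\otimes n}}\otimes m^{(n)})\circ (\tilde\alpha^{\otimes n}\otimes \eta^{\otimes n})=0
\]
in $\perf(A,0)$, and translating back via the adjunction completes the proof.
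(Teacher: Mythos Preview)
Your proof is correct and follows a route that is close in spirit to the paper's but is packaged differently—and, arguably, more cleanly.

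The paper applies \cref{l_tensor_nilpotence} directly to $\alpha\otimes\id^P$ in $\perf(A,w)$, which yields $(\alpha\otimes\id^P)^{\otimes n}\cong \alpha^{\otimes n}\otimes(\id^P)^{\otimes n}=0$ in $\perf(A,n\cdot w)$. The passage from this to the desired vanishing of $\alpha^{\otimes n}\otimes\id^P$ in $\perf(A,w)$ is then handled by an inductive retract argument: using the unit--counit composite $P\to P\otimes P^\vee\otimes P\to P$, one shows that $\alpha^{\otimes i}\otimes\id^P$ is a retract of $\alpha^{\otimes i}\otimes(\id^{P^\vee})^{\otimes i}\otimes(\id^P)^{\otimes(i+1)}$ for each $i$, and the latter is zero for $i\gg 0$.

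You instead transport the whole problem into $\perf(A,0)$ from the outset via the adjunction, apply tensor-nilpotence to $\tilde\alpha\otimes\eta$, and then collapse $\eta^{\otimes n}$ to $\eta$ in one step using the multiplication on the dg algebra $S=\End_A(P)$. This avoids the excursion through $\perf(A,n\cdot w)$ and replaces the induction by a single post-composition with $m^{(n)}$. The two collapsing mechanisms encode the same dualizability data (the retract $P\to S\otimes P\to P$ is precisely the module-theoretic shadow of $m\circ(\eta\otimes\id)=\id$), so the underlying idea is the same; your formulation just exploits it more directly. A small bonus of your route is that your case split on $\p$ also handles $\p\notin\supp_{(A,w)}(P)$ explicitly, though in fact only $\p\in\supp_{(A,0)}(T\otimes S)\subseteq\supp_{(A,w)}(P)$ is needed for \cref{l_tensor_nilpotence}.
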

\begin{proof}
    Below we write $\otimes$ for $\otimes_A$. First observe that the map $\alpha\otimes \id^P\colon F\otimes P\to G\otimes P$ in $\perf(A,w)$ satisfies 
    \[
    \alpha\otimes \id^P\otimes \kappa(\p)\cong \alpha\otimes \kappa(\p)\otimes \id^P=0\otimes \id^P=0
    \]
    for all $\p\in \supp_{(A,w)}(P).$ Hence, by \cref{l_tensor_nilpotence}, we have that there is a positive integer $n$ where 
    \[
    (\alpha\otimes \id^P)^{\otimes n}\cong \alpha^{\otimes n}\otimes (\id^{P})^{\otimes n}=0 \quad\text{in}\quad \perf(A,n\cdot w)\,. 
    \]

    Now observe the desired result follows from the claim that $\alpha^{\otimes i}\otimes \id^P$ is a retract of $\alpha^{\otimes i}\otimes (\id^{P^\vee})^{\otimes i}\otimes (\id^P)^{\otimes (i+1)}$ for all nonnegative integers $i$, as the latter is zero when $i\gg 0$ by what we have shown above. The claim follows easily by induction, and so it suffices to check the base case. Here we have that the composition 
    \[
    P\to \Hom_A(P,P)\otimes P\cong P\otimes P^\vee\otimes P \to P
    \]
    is the identity, where the first map is given by $x\mapsto \id^P\otimes x$, the second map is the isomorphism in \cref{dg_cat}(2), and the third map is the evaluation map on the second two factors.
\end{proof}

The upshot of the lemmas above is the following theorem, which is a curved version of \cite[Proposition~5.5]{Hirano}; the proof of \cref{l_support_containment} follows a similar argument to that in \emph{loc}.\@ \emph{cit}.  

\begin{theorem}
    \label{l_support_containment}
  Assume $A$ is a graded commutative noetherian ring, with $w\in A_{-2}$. If $P,P'$ are in $\perf(A,w)$, then 
    \[P'\in \thick^{\odot}_{(A,w)}(P)\quad \text{if and only if}\quad\supp_{(A,w)}(P')\subseteq \supp_{(A,w)}(P)\,.\]
\end{theorem}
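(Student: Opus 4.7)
The plan for the forward implication is a routine induction on the construction of $\thick^{\odot}_{(A,w)}(P)$. This category is built from $P$ by iteratively applying shifts, finite sums, direct summands, extensions along triangles, and the action $F\odot(-)$ for $F\in\perf(A,0)$. Each such operation preserves or contracts support by \cref{support}; in particular, \cref{support}(4) bounds $\supp_{(A,w)}(F\odot X)$ by $\supp_{(A,w)}(X)$. An easy induction then gives the containment of supports.

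For the converse, assume $\supp_{(A,w)}(P')\subseteq\supp_{(A,w)}(P)$; I will exhibit $P'$ as a retract of an object manifestly in $\thick^{\odot}_{(A,w)}(P)$, following the strategy of \cite[Proposition~5.5]{Hirano}. Set $E\coloneqq P\otimes_A P^{\vee}\in\perf(A,0)$ and extend the trace $\epsilon\colon E\to A$ to an exact triangle
\[
E\xrightarrow{\epsilon}A\xrightarrow{\gamma}L\to\shift E
\]
in $\perf(A,0)$. The key claim to establish is that $\gamma(\p)=0$ for every $\p\in\supp_{(A,w)}(P)$: at such a prime $P(\p)$ is a nonzero perfect complex over the graded field $\kappa(\p)$, so any rank-one projection in $\End_{\kappa(\p)}(P(\p))$ has unit trace, making $\epsilon(\p)$ a split epimorphism and forcing the triangle to split fiberwise. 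Since $\gamma$ then vanishes on $\supp_{(A,w)}(P)\supseteq\supp_{(A,w)}(P')$, \cref{l_nilpotent_map} applied to $\gamma$ and $P'$ will yield $n>0$ with $\gamma^{\otimes n}\otimes\id^{P'}=0$ in $\perf(A,w)$.

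Next, tensoring the triangle $\shift^{-1}\cone(\gamma^{\otimes n})\to A\xrightarrow{\gamma^{\otimes n}}L^{\otimes n}\to\cone(\gamma^{\otimes n})$ with $P'$, the vanishing of the middle map splits the resulting triangle in $\perf(A,w)$, realizing $P'$ as a direct summand of $\shift^{-1}\cone(\gamma^{\otimes n})\otimes_A P'$. To show this latter object lies in $\thick^{\odot}_{(A,w)}(P)$, I will factor $\gamma^{\otimes n}$ as the composition $A\to L\to L^{\otimes 2}\to\cdots\to L^{\otimes n}$, with $i$-th arrow $\id_{L^{\otimes(i-1)}}\otimes\gamma$, and iterate the octahedral axiom; this endows $\shift^{-1}\cone(\gamma^{\otimes n})$ with a finite filtration in $\perf(A,0)$ whose successive quotients are $L^{\otimes(i-1)}\otimes_A E$ for $i=1,\ldots,n$. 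Each such quotient, tensored with $P'$, rewrites as $\bigl(L^{\otimes(i-1)}\otimes_A P^{\vee}\otimes_A P'\bigr)\odot P$---the curvatures $0+(-w)+w$ cancel---and so lies in $\thick^{\odot}_{(A,w)}(P)$. Building up through the filtration gives the desired containment, and hence $P'\in\thick^{\odot}_{(A,w)}(P)$ as a retract.

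The main obstacle will be establishing the pointwise vanishing of $\gamma$, which rests on splitting the trace triangle over each residue graded field; once that is in hand, the tensor-nilpotence input \cref{l_nilpotent_map} and the octahedral bookkeeping are formal.
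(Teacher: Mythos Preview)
Your proposal is correct and follows essentially the same route as the paper; the only difference is that you complete the \emph{counit} $\epsilon\colon P\otimes_A P^\vee\to A$ to a triangle, whereas the paper completes the \emph{unit} $A\to \Hom_A(P,P)\cong P^\vee\otimes_A P$, so your $\gamma\colon A\to L$ plays the role of the paper's $\alpha\colon F\to A$ and the octahedral bookkeeping is the mirror image. One small point to tighten: your phrase ``$P(\p)$ is a nonzero perfect complex over $\kappa(\p)$'' and the rank-one projection argument implicitly assume $w(\p)=0$, which the hypotheses do not guarantee; the clean way to see that $\epsilon(\p)$ is a split epimorphism for any $\p\in\supp_{(A,w)}(P)$ is to note that under adjunction $\epsilon$ corresponds to $\id_P$, hence $\epsilon(\p)\neq 0$, and every nonzero map to $\kappa(\p)$ in the semisimple category $\perf(\kappa(\p),0)$ is split.
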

\begin{proof}
    The forward implication follows easily from the facts on support listed in \cref{support}.  For the converse, complete the morphism $A\to \Hom_A(P,P)$ in $\perf(A,0)$ to an exact triangle  
    \[
    F\xra{\alpha} A\to \Hom_A(P,P)\to \shift F \,.
    \]
    We claim that $\alpha^{\otimes n}\otimes \id^{P'}$ is zero for some positive integer $n$. Indeed, the induced maps \[\kappa(\p)\to \Hom_{\kappa(\p)}(P(\p),P(\p))\] are nonzero split monomorphisms for each $\p\in \supp_{(A,w)}(P')$, and hence $\alpha(\p)=0$ for all such $\p$. Thus, by \cref{l_nilpotent_map}, we have that $\alpha^{\otimes n}\otimes \id^{P'}=0$ in $\perf(A,w)$. In particular, the triangle 
    \[
    F^{\otimes n}\otimes P' \xra{\alpha^{\otimes n}\otimes \id^{P'}=0}P' \to \cone(\alpha^{\otimes n})\otimes P'\to \shift F^{\otimes n}\otimes P' \quad\text{in }\perf(A,w) 
    \]
    says that  
    $P'$ is a retract of $\cone(\alpha^{\otimes n})\otimes P'$. 
    
    It is now a formal argument that $\cone(\alpha^{\otimes n})\otimes P'$ belongs to $\thick_{(A,w)}^{\odot}(P)$, which would finish the proof; we include the argument here for the sake of completeness. For each $i\geqslant 0$, define \[C(i)\coloneqq\cone(F^{\otimes i}\xra{\alpha^{\otimes i}} A^{\otimes i}\cong A)\,.\]
    Now iteratively applying the octahedral axiom we obtain a diagram with exact rows and columns in $\perf(A,0)$:
 \begin{center}
    \begin{tikzcd}
 F^{\otimes (i+1)} \arrow[d,"="] \arrow[r,"\alpha^{\otimes i}\otimes \id^F"] & F\arrow[d]\arrow[r]& C(i)\otimes F\arrow[r] \ar[d] & \shift F^{\otimes (i+1)} \arrow[d,"="] \\
       F^{\otimes(i+1)}  \arrow[r,"\alpha^{\otimes(i+1)}"] & A\arrow[r]\arrow[d]& C(i+1)\arrow[r] \ar[d] & \shift  F^{\otimes (i+1)} \\
       & C(1) \arrow[d] \ar[r,"="] & C(1) \ar[d] &\\ 
 & \shift F\ar[r]&\shift C(i)\otimes F &
    \end{tikzcd} 
\end{center}
In particular, there is an exact triangle 
\[
C(i)\otimes F\otimes P'\to C(i+1)\otimes P'\to C(1)\otimes P'\to \shift C(i)\otimes F\otimes P'
\]
and so by induction it would suffice to show $C(1)\otimes P'\in \thick_{(A,w)}^{\odot}(P).$ However, this is obvious as 
\[
C(1)\otimes P'=\Hom_A(P,P)\otimes P'\cong P^\vee\otimes P\otimes P'\cong P\otimes (P^\vee\otimes P')\,,
\]
where the first isomorphism uses \cref{dg_cat}(2). 
\end{proof}

\section{Classification of thick tensor submodules}
In this section, fix a graded commutative noetherian ring $A$ with $w\in A_{-2}$. 

\begin{definition}\label{defn_sing}
    Define the \textbf{support} of $(A,w)$ to be 
    \begin{align*}
    \supp(A,w)&\coloneqq \bigcup_{P\in \perf(A,w)}\supp_{(A,w)}(P) \\
    &=\{\p\in \spec(A): \perf(A,w)\to\perf(A_\p,w_\p)\text{ is nonzero}\}\,.
    \end{align*}
\end{definition}

The next result bounds the support of $(A,w)$ in elementary terms when $A$ is concentrated in even degrees. 
\begin{proposition}\label{sing_locus}
     Assume $A$ is a graded commutative noetherian ring concentrated in even degrees, and fix $w\in A_{-2}$.   There are inclusions \[
   \{\p\in \spec(A): w\in \p^2\}\subseteq  \supp(A,w)\subseteq  \{\p\in \spec(A): w_\p\in \p^2 A_\p \}\,.\] In particular, $\supp(A,0)=\spec(A)$.
 
 Furthermore, if, in addition, $A$ is assumed to be regular, then
   \[\supp(A,w)=  \{\p\in \spec(A): w_\p\in \p^2 A_\p \}\,.\]
\end{proposition}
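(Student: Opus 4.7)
The plan is to establish the two inclusions separately, deduce the statement $\supp(A,0)=\spec(A)$ as a special case of the first inclusion, and finally handle the regular case.

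For the first inclusion, given $\p$ with $w\in\p^2$, I would construct an explicit witness in $\perf(A,w)$ whose support contains $\p$. Writing $w=\sum_{i=1}^n a_ib_i$ with $a_i,b_i\in\p$ homogeneous of even degree and $|a_i|+|b_i|=-2$, I form the rank-two perfect curved dg $A$-module $K_i$ on $A\oplus\shift^{-1-|b_i|}A$ with differential $\bigl(\begin{smallmatrix}0 & a_i\\ b_i & 0\end{smallmatrix}\bigr)$, so that the curvature of $K_i$ is $a_ib_i$. The tensor product $K\coloneqq K_1\otimes_A\cdots\otimes_A K_n$ lies in $\perf(A,w)$ by \cref{tensor_actions}, and at the residue field $K(\p)$ has zero differential (since all $a_i,b_i\in\p$) and nonzero underlying $\kappa(\p)$-module; hence $K(\p)\neq 0$ in $\perf(\kappa(\p),0)\equiv\perf(\kappa(\p))$, and \cref{fg_closed} yields $\p\in\supp_{(A,w)}(K)\subseteq\supp(A,w)$. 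The special case $w=0$ then gives $\spec(A)\subseteq\supp(A,0)$, which combined with the trivial reverse containment produces $\supp(A,0)=\spec(A)$.

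For the second inclusion, arguing contrapositively, I would show that if $w_\p\notin\p^2A_\p$, then $P_\p=0$ in $\perf(A_\p,w_\p)$ for every $P\in\perf(A,w)$. When $w\notin\p$ the element $w_\p$ is a unit in $A_\p$, and the argument of \cref{curved_spec} extends directly from the graded field case to the graded local ring $A_\p$: using that $A$ is concentrated in even degrees, $P_\p=(P_\p)_{\mathrm{even}}\oplus(P_\p)_{\mathrm{odd}}$ with $\del^{P_\p}=\bigl(\begin{smallmatrix}0 & d_0\\ d_1 & 0\end{smallmatrix}\bigr)$ satisfying $d_0d_1=d_1d_0=w_\p\id$, so $\bigl(\begin{smallmatrix}0 & w_\p^{-1}d_0\\ 0 & 0\end{smallmatrix}\bigr)$ is an explicit null-homotopy for $\id^{P_\p}$.

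The main case is $w\in\p$ with $w_\p\notin\p^2A_\p$; here I would show that $P(\p)$ is acyclic, which suffices by \cref{fg_closed} since $w(\p)=0$. Suppose $\bar y\in P(\p)$ represents a nonzero homology class. Lift to $y\in P_\p$ and write $\del^{P_\p}(y)=\sum p_ie_i$ with $p_i\in\p A_\p$ and $e_i\in P_\p$. Since $A$ is in even degrees, $\del^{P_\p}$ is $A_\p$-linear, and the curvature identity yields
\[
w_\p y=(\del^{P_\p})^2(y)=\sum p_i\del^{P_\p}(e_i).
\]
Reducing modulo $\p^2P_\p$ under the canonical isomorphism $\p P_\p/\p^2P_\p\cong (\p A_\p/\p^2A_\p)\otimes_{\kappa(\p)}P(\p)$ converts this equation into
\[
\bar w_\p\otimes\bar y\in (\p A_\p/\p^2A_\p)\otimes_{\kappa(\p)}\Ima\del^{P(\p)}.
\]
Because $\bar w_\p\neq 0$ in $\p A_\p/\p^2A_\p$, I may extend it to a $\kappa(\p)$-basis; reading off the coefficient of $\bar w_\p$ on both sides forces $\bar y\in\Ima\del^{P(\p)}$, contradicting the choice of $\bar y$.

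For the regular case, combining with the second inclusion it remains to establish $\{w_\p\in\p^2A_\p\}\subseteq\supp(A,w)$. Since $w_\p\in\p^2A_\p$ forces $w\in\p$, multiplication by $w$ is null-homotopic on any finite free $A$-resolution of $A/\p$, which exists since $A$ is regular. Using this null-homotopy together with an Eisenbud-style matrix factorization construction, I would produce a global $P\in\perf(A,w)$ whose localization at $\p$ realizes a local tensor-product matrix factorization of $w_\p=\sum a_ib_i$ (with $a_i,b_i\in\p A_\p$) supported at $\p$. The globalization step---passing from a well-defined local object at $\p$ to a genuine element of $\perf(A,w)$ with the correct support---is the principal obstacle, and it is where regularity of $A$ plays an essential role.
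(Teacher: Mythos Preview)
Your arguments for the two inclusions are correct, and the first one (tensoring rank-two ``Koszul'' factorizations of the summands $a_ib_i$) is exactly what the paper does. For the second inclusion your route is genuinely different: the paper, working over the graded local ring $A_\p$, uses elementary row and column operations to split off all contractible summands from $P_\p$, leaving a nonzero $P'$ with $\del^{P'}(P')\subseteq\p P'$; then $w_\p\cdot\id^{P'}=(\del^{P'})^2$ lands in $\p^2P'$, forcing $w_\p\in\p^2A_\p$. Your cotangent-space argument, reading the equation $w_\p y=\sum p_i\del(e_i)$ modulo $\p^2P_\p$ and projecting onto the $\bar w_\p$-coordinate of $(\p/\p^2)\otimes_{\kappa(\p)}P(\p)$, avoids the decomposition step entirely and is arguably more conceptual; the paper's version has the advantage of producing a normal form for $P_\p$ that is useful elsewhere.

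The regular case, however, is where your proposal has a genuine gap, and the paper's approach is nothing like what you sketch. You propose to globalize an Eisenbud-type matrix factorization built from a null-homotopy for $w$ on a finite free resolution of $A/\p$, and you correctly flag the globalization as the obstacle. The paper sidesteps this entirely. It first reduces to the case where $A$ is (graded) local, hence a regular local ring and in particular a domain. Then there is a dichotomy: either $w_\p=0$, in which case the domain hypothesis forces $w=0$ and one is done by the already-established $\supp(A,0)=\spec(A)$; or $w_\p\neq 0$, in which case $w$ is $A$-regular, and one invokes the well-known equivalence $\perf(A,w)\equiv\D_{\sg}(A/(w))$ of Buchweitz and Orlov. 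Under this equivalence $\supp(A,w)$ is identified with the singular locus of the hypersurface $A/(w)$, and the condition $w_\p\in\p^2A_\p$ says precisely that $(A/(w))_\p$ fails to be regular. So the key input is not an explicit construction but a structural equivalence with the singularity category; regularity of $A$ enters both to guarantee the domain property locally and to ensure that $A/(w)$ is a genuine hypersurface so that the Buchweitz--Orlov equivalence applies.
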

\begin{proof}
    Let $\p\in \supp(A,w)$. Then by definition there exists  $P$ in $\perf(A,w)$ with $P_\p\neq 0$ and $w\in \p$. Since $A$ is concentrated in even degrees, then 
    \[P=P_{\mathrm{even}}\oplus P_{\mathrm{odd}} \quad\text{with}\quad \del^{P}=\begin{pmatrix} 0 & d_1\\
    d_0 & 0
    \end{pmatrix}\,.\]
    Since $A_\p$ is local, using elementary column and row operations, $P_\p\cong T\oplus T'\oplus P'$
    where  $T=T_{\textrm{even}}\oplus T_{\textrm{odd}}$, $T'=T'_{\textrm{even}}\oplus T'_{\textrm{odd}}$,  $P'=P'_{\textrm{even}}\oplus P'_{\textrm{odd}}$, and 
\[
\del^T=\begin{pmatrix} 0 & u^{-1}w\\
    u & 0
    \end{pmatrix}\,, \quad \del^{T'}=\begin{pmatrix} 0 & (u')^{-1}\\
    u'w & 0    \end{pmatrix}\,, \quad \text{and }\del^{P'}=\begin{pmatrix} 0 & d_1'\\
    d_0'& 0
    \end{pmatrix}
\]
where $u$, $u'$ are square matrices with units in $A_\p$ on their respective diagonals, and $\del^{P'}(P')\subseteq \p P'$. Note that $T$ and $T'$ are zero in $\perf(A_\p,w_\p)$. Since $P\neq 0$, we have that $P'\neq 0$. 
Therefore, 
    \[
    w_\p\cdot \id^{P'}=\begin{pmatrix} 0 & d_1'\\
    d_0' & 0
    \end{pmatrix}^2=\begin{pmatrix}d_1'd_0'&0 \\
    0 & d_0'd_1'
    \end{pmatrix}\in \p^2P'\,
    \]
    and so $w_\p\in \p^2A_\p.$ 

 Next, if $w\in \p^2$ then we can write $w_\p=\sum_{i=1}^n a_ib_i$ with $a_i,b_i\in \p$.  For each $i$, set $P(i)$ to be the object in $\perf(A, a_ib_i)$ given by 
    \[
    A\oplus  \shift^{|a_i|+1} A \quad\text{with}\quad  \del^{P(i)}=\begin{pmatrix}
    0 & a_i\\ 
    b_i & 0
\end{pmatrix}\,.
    \]
 Therefore, 
    \[
    P\coloneqq P(1)\otimes_A \cdots \otimes_A P(n) \quad \text{is in}\quad \perf(A,w)\,.
    \] 
    As the entries in the differential of $P_\p$ are all non-units in $A_\p$ it follows that $P_\p$ cannot be zero in $\perf(A_\p,w_\p)$. Therefore, by definition $\p\in \supp(A,w).$ 

   For the furthermore statement, the same argument as in the proof of \cite[Proposition~3.5]{Hirano} (see also \cite[Theorem A.11]{Briggs-Walker}) applies; we briefly sketch it for the convenience of the reader. 
   
   One first reduces to the case that $A$ is local (in the graded sense), and assume $\p\in \spec A$ satisfies that $w_\p\in\p^2A_\p$. If $w_\p=0$, then $w=0$ as $A$ is a domain, and hence $\p\in \supp(A,w)$ by the already established containment above. If $w_\p\neq 0$, then $w$ is $A$-regular, again making use of the fact that $A$ is a domain. In this setting, $\perf(A,w)$ is equivalent to the singularity category of the graded hypersurface ring $A/(w)$; cf.\@ \cite{Buch,Orlov}.
In particular, $\supp(A,w)$ corresponds to the singular locus of $A/(w)$, that is to say, 
 \[
 \supp(A,w)\cong\{\p\in \spec A/(w): (A/(w))_\p\text{ is not regular}\}\,.
 \]
 Since $w\neq 0$ and $w_\p\in \p^2 A_\p$, we get that $A_\p/ w_\p A_\p\cong (A/(w))_\p$ is not regular. Thus, $\p\in \supp(A,w)$. This completes the proof.
\end{proof}

\begin{remark}
    The construction of $P$ in the second paragraph of the proof of \cref{sing_locus} is based on a classical one; cf.\@ \cite{BGS}. The same construction appears in the proof of \cite[Lemma 3.6]{Hirano}; see also \cite[Theorem~A.11]{Briggs-Walker}. Furthermore, similar constructions are present also in \cite{Avramov/Gasharov/Peeva:1997,Yoshino}.
\end{remark}

\begin{remark}
 As noted in \cref{sing_locus}, when $w=0$ we have that $\supp(A,w)$ is simply $\spec(A)$, and hence it is clearly closed. In an orthogonal direction, when $A$ is regular and $w$ is a regular element, as noted in the proof of \cref{sing_locus}, $\supp(A,w)$ can be identified with the singular points 
 \[
 \{\p\in \spec A/(w): (A/(w))_\p\text{ is not }regular\}\,;
 \]
 in the case that $A$ is further assumed to be smooth over a graded field it follows that $\supp(A,w)$ is closed in $\spec(A)$, as $A/(w)$ is excellent. 
 
 A set theoretic description of $\supp(A,w)$ is given when $A$ is a symmetric algebra over a regular ring in \cref{decomposition}. It would be interesting to describe exactly when $\supp(A,w)$ is closed in elementary terms like the conditions bounding this set in \cref{sing_locus}.
\end{remark}

Recall that a union of closed subsets is called specialization closed. 

\begin{theorem}\label{classification}
Assume $A$ is a graded commutative noetherian ring and $w\in A_{-2}$.  If $A$ is concentrated in even degrees, then there are inclusion preserving bijections 
\[
\begin{tikzcd}
\biggm\{\begin{matrix}\text{Thick tensor submodules}\\ \text{of } \perf(A,w) \end{matrix} \biggm\}\arrow[r,shift left=0.8ex,"\sigma"]&\biggm\{\begin{matrix}\text{Specialization closed subsets}\\ \text{of } \supp(A,w)\end{matrix} \biggm\}\arrow[l,shift left=0.8ex,"\tau"]
\end{tikzcd}\,,
\]
where the inverse bijections are given by
\[
\sigma(\T)=\bigcup_{P\in \T}\supp_{(A,w)}(P)\quad\text{and}\quad \tau(\mathcal{V})=\{P\in \perf(A,w)\mid \supp_{(A,w)}(P)\subseteq \mathcal{V}\}\,.
\]
\end{theorem}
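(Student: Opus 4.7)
The plan is to follow the classical Hopkins--Neeman--Thomason template, with \cref{l_support_containment} and a Koszul-type construction (analogous to the one in the proof of \cref{sing_locus}) doing the essential work. For well-definedness: by \cref{fg_closed}, each $\supp_{(A,w)}(P)$ is closed in $\spec(A)$, so $\sigma(\T)$ is a union of closed subsets of $\supp(A,w)$ and hence specialization closed. For $\tau$, the properties recorded in \cref{support}---in particular the equality in \cref{support}(4), which holds because $A$ is concentrated in even degrees---show that $\tau(\mathcal{V})$ is closed under shifts, summands, cones, and the $\perf(A,0)$-action, so it is a thick tensor submodule of $\perf(A,w)$.

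For $\tau\sigma=\id$, the inclusion $\T\subseteq \tau\sigma(\T)$ is immediate. Conversely, given $P'\in \tau\sigma(\T)$, the set $\supp_{(A,w)}(P')$ is a closed subspace of the noetherian space $\spec(A)$ and hence quasi-compact, so it is covered by $\supp_{(A,w)}(P_1),\ldots,\supp_{(A,w)}(P_k)$ for some $P_1,\ldots,P_k\in \T$. Additivity of support (\cref{support}(2)) then gives $\supp_{(A,w)}(P')\subseteq \supp_{(A,w)}(P_1\oplus\cdots\oplus P_k)$, and \cref{l_support_containment} forces $P'\in \thick^{\odot}_{(A,w)}(P_1\oplus\cdots\oplus P_k)\subseteq \T$.

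The heart of the argument is $\sigma\tau=\id$. The inclusion $\sigma\tau(\mathcal{V})\subseteq \mathcal{V}$ is immediate, so I need to produce, for each $\p\in \mathcal{V}$, an object $P\in \tau(\mathcal{V})$ with $\p\in \supp_{(A,w)}(P)$. Since $\p\in \mathcal{V}\subseteq \supp(A,w)$, I pick some $Q\in \perf(A,w)$ with $\p\in \supp_{(A,w)}(Q)$. I then choose homogeneous generators $a_1,\ldots,a_n$ of $\p$ and form the Koszul complex $K\coloneqq K_1\otimes_A\cdots \otimes_A K_n\in \perf(A,0)$, where each $K_i$ is the two-term curved dg module $A\oplus \shift^{|a_i|+1}A$ with off-diagonal differential given by multiplication by $a_i$. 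A direct check via \cref{fg_closed} (when $a_i\notin\q$, multiplication by $a_i^{-1}$ provides an explicit null-homotopy for $\id^{K_{i,\q}}$ in $\perf(A_\q,0)$; when $a_i\in\q$, passing to $\kappa(\q)$ kills the differential, so $K_i(\q)\neq 0$) shows $\supp_{(A,0)}(K_i)=V(a_i)$, and the equality case of \cref{support}(4) then yields $\supp_{(A,0)}(K)=\bigcap_i V(a_i)=V(\p)$. Setting $P\coloneqq K\odot Q\in \perf(A,w)$ and applying \cref{support}(4) once more,
\[
\supp_{(A,w)}(P)=V(\p)\cap \supp_{(A,w)}(Q)\subseteq V(\p)\subseteq \mathcal{V}\,,
\]
the last inclusion because $\mathcal{V}$ is specialization closed and contains $\p$. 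As $\p$ lies in both factors of the intersection, $\p\in \supp_{(A,w)}(P)$, so $P$ witnesses $\p\in \sigma\tau(\mathcal{V})$. The only genuinely delicate ingredient is \cref{l_support_containment} (resting on curved tensor-nilpotence), already in hand; beyond that, the crucial move is the use of the Koszul object $K$ to cut the support of $Q$ down to $V(\p)$ via the $\perf(A,0)$-action---this is exactly where the tensor module structure, and not just the triangulated structure of $\perf(A,w)$, is essential.
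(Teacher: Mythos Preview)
Your proof is correct and follows essentially the same approach as the paper's: well-definedness via \cref{fg_closed} and \cref{support}, the inclusion $\tau\sigma(\T)\subseteq\T$ via noetherianity of $\spec(A)$ together with \cref{l_support_containment}, and the inclusion $\mathcal{V}\subseteq\sigma\tau(\mathcal{V})$ via tensoring an object supported at $\p$ with the Koszul object $K$ on generators of $\p$. The only cosmetic difference is that the paper observes $\supp_{(A,w)}(K\odot Q)=\mathcal{V}(\p)$ exactly (using closedness of $\supp_{(A,w)}(Q)$), whereas you only use the containment in $V(\p)$, which is all that is needed.
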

\begin{proof}
    By \cref{support} and \cref{fg_closed}, the maps $\sigma$ and $\tau$ are well-defined, respectively. Also, it is clear that the maps are inclusion preserving and satisfy: for any specialization closed subset $\mathcal{V}$ of $\supp(A,w)$ we have $\sigma\tau(\mathcal{V})\subseteq \mathcal{V}$, and for any thick tensor submodule $\T$ of $\perf(A,w)$ we have $\T\subseteq \tau\sigma(\T).$ Therefore, it remains to show the two reverse containments. 

Let $\p\in \mathcal{V}$. By \cref{fg_closed}, there exists $P\in \perf(A,w)$ such that 
\[P(\p)\neq 0\quad\text{in }\perf(\kappa(\p),w(\p))\,.\] Let $\p=(x_1,\ldots,x_n)$ and let $K(i)$ denote 
\[
A\oplus \shift^{|x_i|+1} A \quad\text{with}\quad \del^{K(i)}=\begin{pmatrix}
    0 & 0\\ 
    x_i & 0
\end{pmatrix},
\]
where $1\leqslant i\leqslant n$. 
Set $K=K(1)\otimes_A\cdots \otimes_A K(n)$ which belongs to $ \perf(A,0)$ and satisfies $\supp_{(A,0)}(K)=\mathcal{V}(\p)$. Therefore,  $K\odot P$ is in $\perf(A,w)$ and 
\[\supp_{(A,w)}(K\odot P)= \supp_{(A,0)}(K)\cap \supp_{(A,w)}(P)=\mathcal{V}(\p)\,;\]
the first equality holds by \cref{support}, and the second equality uses that $\supp_{(A,0)}(K)=\mathcal{V}(\p)$ and $\p\in \supp_{(A,w)}(P)$
 with $\supp_{(A,w)}(P)$ being closed (see \cref{fg_closed}). Thus, we have shown $\p\in \supp_{(A,w)}(K\odot P)\subseteq \sigma\tau(\mathcal{V})$.

 Let $P'$ be in $\tau\sigma(\T)$. Then by definition, 
 \[
 \supp_{(A,w)}(P')\subseteq \bigcup_{P\in \T }\supp_{(A,w)}(P)\,,
 \]
 and now using that $\supp_{(A,w)}(P')$ is closed (by \cref{fg_closed}) and $\spec(A)$ is noetherian, there exist $P(1),\ldots,P(n)$ in $\T$ such that 
 \[
 \supp_{(A,w)}(P')\subseteq \bigcup_{i=1}^n\supp_{(A,w)}(P(i))=\supp_{(A,w)}(P) \quad\text{where }P=\bigoplus_{i=1}^n P(i)\,.
 \]
 Now it remains to apply \cref{l_support_containment} to deduce that 
\[
P'\in \thick_{(A,w)}^{\odot}(P)\subseteq \T
\] 
where the subset containment uses that $\T$ is a tensor submodule of $\perf(A,w)$.
\end{proof}

The next result specializes to the results of Hopkins~\cite[Theorem~11]{Hopkins} and Neeman~\cite[Theorem~1.5]{Neeman92}; this is also analogous to the classification of Thomason~\cite[Theorem~3.15]{Thomason} in the geometric setting, where one needs that the thick subcategories are closed under a tensor action as well. See also the theorem of Hirano~\cite[Theorem~1.1]{Hirano}, also in a geometric context for matrix factorizations. 

\begin{corollary}\label{cor_tt}
     In the notation and setting of \cref{classification}, there is a bijection of lattices: \[
\begin{tikzcd}
\biggm\{\begin{matrix}\text{Thick tensor ideals}\\ \text{of } \perf(A,0) \end{matrix} \biggm\}\arrow[r,shift left=0.8ex,"\sigma"]&\biggm\{\begin{matrix}\text{Specialization closed subsets}\\ \text{of } \spec(A)\end{matrix} \biggm\}\arrow[l,shift left=0.8ex,"\tau"]
\end{tikzcd}\,.
\]
\end{corollary}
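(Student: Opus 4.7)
The plan is to deduce the corollary as a direct specialization of \cref{classification} to the case $w=0$, viewing $\perf(A,0)$ as a tensor module over itself. First I would invoke \cref{classification} with the distinguished element taken to be $0\in A_{-2}$: this is legal since $A$ is assumed concentrated in even degrees. The action $\odot\colon \perf(A,0)\times \perf(A,0)\to \perf(A,0)$ is, by \cref{e_tt} and \cref{tensor_actions}, the usual tensor product, and with respect to this action the thick tensor submodules of $\perf(A,0)$ are precisely the thick subcategories closed under $\otimes_A$, i.e.\@ the thick tensor ideals of the tensor triangulated category $(\perf(A,0),\otimes_A,A)$.

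Next, I would identify the support side. By \cref{sing_locus}, one has the equality $\supp(A,0)=\spec(A)$, so specialization closed subsets of $\supp(A,0)$ are precisely specialization closed subsets of $\spec(A)$. Moreover, for any $P$ in $\perf(A,0)$, the support $\supp_{(A,0)}(P)$ coincides with the ordinary support $\supp_A(A\cdot \id^P)$ by \cref{fg_closed}, which is consistent with the usual notion on the tensor triangulated side. Thus the maps $\sigma$ and $\tau$ defined in \cref{classification} restrict to the ones in the statement of the corollary.

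Finally, I would conclude by noting that \cref{classification} yields the claimed inclusion-preserving bijection; upgrading the bijection of posets to a bijection of lattices is automatic, since on both sides the meet is given by intersection (of subcategories or subsets) and bijections of posets that preserve order in both directions automatically preserve meets and joins. The only mildly substantive point to check is that a thick subcategory of $\perf(A,0)$ is closed under $\otimes_A$ if and only if it is closed under the $\odot$-action used in \cref{classification}; this is immediate from the definition of $\odot$ in \cref{e_tt}. There is really no obstacle here beyond these bookkeeping identifications, since all the analytic content was already absorbed by \cref{l_support_containment} and \cref{classification}.
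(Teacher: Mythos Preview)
Your proposal is correct and follows essentially the same approach as the paper: specialize \cref{classification} to $w=0$, note that thick tensor submodules of $\perf(A,0)$ under the $\odot$-action are exactly thick tensor ideals, and invoke \cref{sing_locus} to identify $\supp(A,0)$ with $\spec(A)$. The paper's proof is simply a two-sentence version of what you wrote; your additional remarks on \cref{fg_closed} and the lattice structure are accurate but not needed for the argument.
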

\begin{proof}
    This is obtained by applying the theorem with $w=0$, and noting that a tensor submodule of $\perf(A,0)$ is exactly a thick tensor ideal. Furthermore, when $w=0$ it follows from \cref{sing_locus} that $\supp(A,0)=\spec(A)$. 
    \end{proof}

\begin{corollary}\label{classification_reg}
    If, in addition to the hypotheses of \cref{classification}, $A$ is regular, then there is a bijection of lattices: \[
\begin{tikzcd}
\biggm\{\begin{matrix}\text{Thick subcategories}\\ \text{of } \perf(A,w) \end{matrix} \biggm\}\arrow[r,shift left=0.8ex,"\sigma"]&\biggm\{\begin{matrix}\text{Specialization closed subsets}\\ \text{of } \supp(A,w)\end{matrix} \biggm\}\arrow[l,shift left=0.8ex,"\tau"]
\end{tikzcd}\,.
\]
\end{corollary}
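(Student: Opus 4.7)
The plan is to reduce this corollary directly to \cref{classification} by showing that, under the additional regularity hypothesis on $A$, every thick subcategory of $\perf(A,w)$ is automatically a thick tensor submodule. Once this is established, the bijection will follow immediately with the same maps $\sigma$ and $\tau$ as in \cref{classification}.

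First I would invoke \cref{p:perf}: since $A$ is regular, we have the equality $\thick_{\perf(A,0)}(A) = \perf(A,0)$. In other words, the tensor unit $A$ is a classical generator of the tensor triangulated category $\perf(A,0)$. Next, by the general observation recorded in \cref{tensor_module}, whenever the tensor unit of the acting tensor triangulated category $\T$ generates $\T$ as a thick subcategory, every thick subcategory of a tensor $\T$-module is automatically closed under the $\T$-action, hence a tensor submodule. Applying this with $\T = \perf(A,0)$ and the module $\perf(A,w)$ (with the action $\odot$ from \cref{tensor_actions}), we conclude that
\[
\bigl\{\text{thick subcategories of }\perf(A,w)\bigr\} \;=\; \bigl\{\text{thick tensor submodules of }\perf(A,w)\bigr\}.
\]

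With this identification in hand, the bijection is simply the one from \cref{classification} applied verbatim: the maps $\sigma(\T) = \bigcup_{P\in \T}\supp_{(A,w)}(P)$ and $\tau(\mathcal{V}) = \{P \in \perf(A,w) \mid \supp_{(A,w)}(P) \subseteq \mathcal{V}\}$ remain well-defined and inclusion preserving, and the hypotheses of \cref{classification} ($A$ graded commutative noetherian and concentrated in even degrees) are satisfied. There is no real obstacle here; the substance of the classification was already done in \cref{classification}, and the only thing the regularity hypothesis buys us is the removal of the tensor-submodule condition on the left-hand side. One minor bookkeeping point to verify is that the support function and the target set $\supp(A,w)$ are unchanged under the passage from tensor submodules to plain thick subcategories, which is immediate from their definitions.
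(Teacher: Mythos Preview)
Your proposal is correct and follows essentially the same approach as the paper: use \cref{p:perf} to see that the tensor unit $A$ classically generates $\perf(A,0)$ when $A$ is regular, invoke \cref{tensor_module} to conclude that every thick subcategory of $\perf(A,w)$ is automatically a tensor submodule, and then apply \cref{classification}.
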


\begin{proof}
    By \cref{p:perf}, the tensor unit $A$ in $\perf(A,0)$ is a classical generator. Hence, every thick subcategory of $\perf(A,w)$ is a tensor submodule; cf.\@ \cref{tensor_module}. Therefore the desired result follows immediately from \cref{classification}.
\end{proof}

\begin{remark}
\label{remark_ben}
The previous corollary is a recent result of Briggs in \cite[Theorem~A.11]{Briggs-Walker}. It is worth mentioning that the result in loc.\@ cit.\@ is proved using a theorem where instead one asks that certain internal Hom's are perfect dg $A$-modules (which is satisfied when $A$ is regular) to conclude that support of endomorphism dg modules detects thick subcategories in $\perf(A,w)$. More precisely, if $P,P'$ are in $\perf(A,w)$ with $\Hom_A(P,P)$, $\Hom_A(P',P)$ and $\Hom_A(P',P')$  perfect dg $A$-modules then 
\[
\supp_A \Hom_A(P,P)\subseteq \supp_A\Hom_A(P',P')\iff P\in \thick_{(A,w)}(P')\,;
\]
see \cite[Theorem~A.4]{Briggs-Walker}.
\end{remark}

The last corollary of the section determines $\perf(A,w)$ admits a $\odot$-generator exactly when $\supp(A,w)$ is a closed subset. This applies, for instance, when $w=0$ or when $w$ is a regular element and $A$ is a smooth over a graded field. 
\begin{corollary}\label{generators}
       In the notation and setting of \cref{classification},  $\supp(A,w)$ is closed if and only if $\perf(A,w)$ admits $\odot$-generator, i.e., there exists $P$ in $\perf(A,w)$ such that $\thick_{(A,w)}^\odot(P)=\perf(A,w)$.

            If, furthermore, $A$ is regular, then these conditions are also equivalent to $\perf(A,w)$ admitting a classical generator. 
\end{corollary}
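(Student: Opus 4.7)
The plan is to translate both sides of the equivalence through the classification in \cref{classification}, and thereby reduce the question to whether the total support $\supp(A,w)$ can be realized as the support of a single object of $\perf(A,w)$.

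For the forward direction, suppose $\supp(A,w)$ is closed in $\spec(A)$. Since $\spec(A)$ is noetherian, there are only finitely many minimal primes $\p_1,\ldots,\p_k$ of this closed set. By the definition of $\supp(A,w)$, for each $i$ one may choose $P_i\in\perf(A,w)$ with $\p_i\in\supp_{(A,w)}(P_i)$. I would then set $P\coloneqq\bigoplus_{i=1}^k P_i$; the additivity of support from \cref{support} together with \cref{fg_closed} give
\[
\{\p_1,\ldots,\p_k\}\subseteq \supp_{(A,w)}(P)\subseteq \supp(A,w)\,,
\]
and since $\supp_{(A,w)}(P)$ is closed and contains every minimal prime of $\supp(A,w)$, equality must hold in the right-hand inclusion. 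Applying $\tau$ in \cref{classification} then yields $\thick_{(A,w)}^\odot(P)=\tau(\supp(A,w))=\perf(A,w)$, where the last equality holds because every object of $\perf(A,w)$ has support inside $\supp(A,w)$ by definition.

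The converse is essentially tautological: if $\thick_{(A,w)}^\odot(P)=\perf(A,w)$, then applying $\sigma$ and using the closure properties in \cref{support} (which imply $\sigma$ applied to any tensor-thick subcategory generated by $P$ equals $\supp_{(A,w)}(P)$) gives
\[
\supp(A,w)=\sigma(\perf(A,w))=\sigma(\thick_{(A,w)}^\odot(P))=\supp_{(A,w)}(P)\,,
\]
which is closed by \cref{fg_closed}. For the final assertion, when $A$ is regular \cref{p:perf} shows that the tensor unit $A$ is a classical generator of $\perf(A,0)$, so by \cref{tensor_module} every thick subcategory of $\perf(A,w)$ is automatically a $\perf(A,0)$-tensor submodule. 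Consequently $\thick_{(A,w)}^\odot(P)=\thick_{(A,w)}(P)$ for every $P$, and the existence of a $\odot$-generator is equivalent to the existence of a classical generator.

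The only step that genuinely uses the hypothesis is the construction of a single $P$ whose support equals the whole closed set $\supp(A,w)$; this is where the closedness assumption combines with the noetherian property of $\spec(A)$ and the closedness of each individual support to cut down an a priori infinite union to a finite one. Everything else is a direct application of the classification already established.
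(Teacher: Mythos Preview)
Your proof is correct and follows the same approach as the paper: both deduce the equivalence directly from the classification in \cref{classification}, using that a closed subset of $\supp(A,w)$ corresponds to a singly $\odot$-generated thick tensor submodule. The paper's proof is a one-line remark that ``$\tau$ takes closed subsets to $\thick_{(A,w)}^\odot(P)$ for some $P$,'' whereas you spell out the underlying noetherian reduction (finitely many minimal primes, take the direct sum) explicitly; this is exactly the argument already embedded in the proof of \cref{classification}, so nothing new is needed.
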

\begin{proof}
    The first equivalence is an immediate consequence of the classification in \cref{classification}, as $\tau$ takes closed subsets to $\thick_{(A,w)}^\odot(P)$ for some $P$ in $\perf(A,w)$. When $A$ is regular, there is no difference between thick subcategories and thick tensor submodules; see \cref{p:perf,tensor_module}.
\end{proof}
\section{Thick subcategories over a Koszul complex}\label{Section-thick subcategories}
In this section we prove \cref{main_intro_thm} from the introduction; cf.\@ \cref{main_result}. First, we introduce the notation used throughout the rest of the section. 

Fix a commutative noetherian ring $R$. We will assume $R$ is regular, in the usual sense that $R_\p$ is a regular local ring for all $\p\in \spec R,$ or equivalently, $\Df(R)=\perf(R).$
We also fix a finite list of elements $\f=f_1,\ldots,f_n$ in $R$, and let $E$ denote the Koszul complex on $\f$ over $R$. This will be regarded as a dg $R$-algebra in the usual way. That is to say, as an $R$-algebra 
\[
E\cong \bigwedge_R (Re_1\oplus \cdots \oplus Re_n)
\]
and the differential is determined $\del e_i=f_i$ and the Leibniz rule. We also let $A=R[\chi_1,\ldots,\chi_n]$ where $|\chi_i|=-2$, and set 
\[
w=f_1\chi_1+\cdots +f_n\chi_n\in A_{-2}\,.
\]

The proof of \cref{main_result} makes use of \cref{classification_reg} and a curved BGG correspondence of Martin~\cite{Martin}; this extends a dg version of the BGG correspondence from \cite[Theorem~7.4]{ABIM}, that holds over a base field, to regular base rings.

\begin{chunk}\label{curved_dg}
    First, define $X=\Hom_R(A,R)\otimes_R E$ equipped with the degree $-1$ endomorphism 
    \[
    \del^X=\id^{\Hom_R(A,R)}\otimes \del^E-\sum_{i=1}^n\chi_i\otimes e_i\,,
    \]
    where the second term is interpreted as left multiplication on $X$. Note that $X$ is a curved dg $A$-module with curvature $-w.$ Given a dg $E$-module $M$, then equip $\Hom_E(X,M)$ with the degree $-1$ endomorphism 
    \[
    \del^{\Hom(X,M)}(\alpha)=\del^M\alpha-(-1)^{|\alpha|}\alpha\del^X\quad\text{ for each }\alpha\in \Hom_E(X,M)\,.
    \]
    This assignment defines an exact functor $\mathsf{h}\colon \Df(E)\to \perf(A,w)$; cf.\@ \cite[Lemma~5.4]{Martin}. Furthermore, in loc.\@ cit.\@ it is shown that $\mathsf{h}M$ can be identified, up to  isomorphism, with the following object in $\perf(A,w)$:
    \[
    A\otimes_R F\quad\text{with}\quad \del=\id^A\otimes \del^F+\sum_{i=1}^n\chi_i\otimes e_i\,,
    \]
    where $F\simeq M$ such that $F$ is a dg $E$-module that is a bounded complex of finite free $R$-modules. The main result of Martin's thesis is that $\mathsf{h}\colon \Df(E)\to \perf(A,w)$ is an equivalence of triangulated categories. 
\end{chunk}

Finally, recall the definition of $\supp(A,w)$ from \cref{defn_sing}.
    A set-theoretic description of $\supp(A,w)$ is given in \cref{decomposition}. 

\begin{theorem}\label{main_result}
In the above notation,  there are inclusion preserving bijections 
\[
\begin{tikzcd}
\biggm\{\begin{matrix}\text{Thick subcategories}\\ \text{of } \Df(E) \end{matrix} \biggm\}\arrow[r,shift left=0.8ex,"\sigma"]&\biggm\{\begin{matrix}\text{Specialization closed}\\ \text{subsets of } \supp(A,w)\end{matrix} \biggm\}\arrow[l,shift left=0.8ex,"\tau"]
\end{tikzcd}\,,
\]
where the inverse bijections are given by
\[
\sigma(\T)=\bigcup_{M\in \T}\supp_{(A,w)}(\mathsf{h}M) \quad\text{and}\quad \tau(\mathcal{V})=\{M\in \Df(E)\mid \supp_{(A,w)}(\mathsf{h}M)\subseteq \mathcal{V}\}\,.
\]
\end{theorem}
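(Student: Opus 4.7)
The plan is to reduce \cref{main_result} to the already-established classification for perfect curved dg modules in \cref{classification_reg} by transporting along Martin's curved BGG equivalence recalled in \cref{curved_dg}. Specifically, since $\mathsf{h}\colon \Df(E) \xra{\simeq} \perf(A,w)$ is an equivalence of triangulated categories, it restricts to an inclusion-preserving bijection between the lattices of thick subcategories of the two sides; composing this bijection with the one given by \cref{classification_reg} should produce the desired bijection, with the support of $M\in \Df(E)$ simply being defined as $\supp_{(A,w)}(\mathsf{h}M)$, which matches the formulas for $\sigma$ and $\tau$ in the statement.

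First I would verify that \cref{classification_reg} applies to $(A,w)$. The ring $A = R[\chi_1,\ldots,\chi_n]$ is a graded polynomial ring over the regular ring $R$ with each generator $\chi_i$ in degree $-2$, so $A$ is concentrated in even degrees and commutative noetherian. To see that $A$ is regular in the sense of the paper (every finitely generated graded module has finite projective dimension), one can reduce to the fact that $R$ is regular and that regularity ascends to a polynomial extension in the graded setting; any finitely generated graded $A$-module admits a finite graded free resolution by combining a resolution of the underlying $R$-structure with the Koszul-type resolution of $A$ over $A$. Thus \cref{classification_reg} gives the inclusion-preserving bijection
\[
\biggm\{\text{Thick subcategories of }\perf(A,w)\biggm\} \longleftrightarrow \biggm\{\text{Specialization closed subsets of }\supp(A,w)\biggm\}
\]
with the maps sending a thick subcategory $\mathsf{T}'$ to $\bigcup_{P\in \mathsf{T}'}\supp_{(A,w)}(P)$ and a specialization closed subset $\mathcal{V}$ to $\{P\in \perf(A,w)\mid \supp_{(A,w)}(P)\subseteq \mathcal{V}\}$.

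Next I would transport this bijection across the equivalence $\mathsf{h}$. Because $\mathsf{h}$ is an equivalence of triangulated categories, a full subcategory $\mathsf{T}\subseteq \Df(E)$ is thick if and only if its essential image $\mathsf{h}(\mathsf{T})\subseteq \perf(A,w)$ is thick, and the assignment $\mathsf{T}\mapsto \mathsf{h}(\mathsf{T})$ is an inclusion-preserving bijection on lattices of thick subcategories with inverse $\mathsf{T}'\mapsto \mathsf{h}^{-1}(\mathsf{T}')$. Composing with the bijection from \cref{classification_reg}, and unwinding the definitions, one sees that $\mathsf{T}$ corresponds to $\bigcup_{M\in \mathsf{T}}\supp_{(A,w)}(\mathsf{h}M)$ and that a specialization closed subset $\mathcal{V}\subseteq \supp(A,w)$ corresponds to $\{M\in \Df(E)\mid \supp_{(A,w)}(\mathsf{h}M)\subseteq \mathcal{V}\}$, giving precisely the maps $\sigma$ and $\tau$ in the statement.

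I do not expect a serious obstacle here: once Martin's equivalence $\mathsf{h}$ and \cref{classification_reg} are in hand, the argument is essentially formal transport along an equivalence. The only mild point to check is the regularity of $A$; if this is not already recorded, a one-line citation or short verification using that $R$ is regular and $A$ is a graded polynomial extension should suffice.
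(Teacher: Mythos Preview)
Your proposal is correct and matches the paper's own proof essentially line for line: the paper invokes Martin's curved BGG equivalence $\mathsf{h}\colon \Df(E)\to \perf(A,w)$ to get a bijection on lattices of thick subcategories, then applies \cref{classification_reg} and observes that the composite is given by the stated maps $\sigma$ and $\tau$. Your additional remark verifying that $A=R[\chi_1,\ldots,\chi_n]$ is regular is the only point the paper leaves implicit.
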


\begin{proof}[Proof of \cref{main_result}]
   The curved BGG equivalence $\mathsf{h}\colon \Df(E)\to \perf(A,w)$ in \cref{curved_dg} establishes a bijection \[\biggm\{\begin{matrix}\text{Thick subcategories}\\ \text{of } \Df(E) \end{matrix} \biggm\}\xlongrightarrow{\tilde{\mathsf{h}}} \biggm\{\begin{matrix}\text{Thick subcategories}\\ \text{of } \perf(A,w)\end{matrix} \biggm\}\,.\]
    It remains to apply \cref{classification_reg}, and notice the desired bijection is witnessed by the maps $\sigma$ and $\tau$  as claimed. 
    \end{proof}

    By the curved BGG correspondence in \cref{curved_dg} and \cref{e_tt}, one can upgrade the triangulated structure on $\Df(E)$ to a tensor triangular one when $\f=0.$ Specializing to this case, we obtain the following corollary. 

\begin{corollary}\label{cor_zero}
     In the notation and setting of \cref{main_result}, if $\f=0$, then there is a bijection of lattices: \[
\begin{tikzcd}
\biggm\{\begin{matrix}\text{Thick subcategories}\\ \text{of } \Df(E) \end{matrix} \biggm\}\arrow[r,shift left=0.8ex,"\sigma"]&\biggm\{\begin{matrix}\text{Specialization closed subsets}\\ \text{of } \spec(A)\end{matrix} \biggm\}\arrow[l,shift left=0.8ex,"\tau"]
\end{tikzcd}\,.
\]
\end{corollary}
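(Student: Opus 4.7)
The plan is to obtain this corollary as a direct specialization of \cref{main_result}. When $\f = 0$, every component of the tuple is zero, so by definition
\[
w = f_1\chi_1 + \cdots + f_n\chi_n = 0 \in A_{-2}\,.
\]
Therefore \cref{main_result} already provides an inclusion preserving bijection between the thick subcategories of $\Df(E)$ and the specialization closed subsets of $\supp(A,w) = \supp(A,0)$, witnessed by the maps $\sigma$ and $\tau$.

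The only remaining point is to identify the set $\supp(A,0)$ with $\spec(A)$. Since $R$ is commutative and $A = R[\chi_1,\dots,\chi_n]$ with each $|\chi_i| = -2$, the graded ring $A$ is concentrated in even degrees, so \cref{sing_locus} applies and yields the equality $\supp(A,0) = \spec(A)$. Substituting this into the conclusion of \cref{main_result} gives precisely the claimed bijection between thick subcategories of $\Df(E)$ and specialization closed subsets of $\spec(A)$.

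There is no serious obstacle here: the statement is a straightforward corollary once the main theorem and the support computation of \cref{sing_locus} are in hand. The only mild point worth noting is that when $\f = 0$, the differential on $E$ vanishes and $E$ is literally the exterior algebra $\bigwedge_R(Re_1 \oplus \cdots \oplus Re_n)$, so this corollary recovers (and extends to regular base rings) the classification of thick subcategories over an exterior algebra promised in the introduction.
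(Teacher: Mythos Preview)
Your proof is correct and takes essentially the same approach as the paper. The paper's proof simply cites \cref{cor_tt,main_result}; the relevant content of \cref{cor_tt} here is precisely the identification $\supp(A,0)=\spec(A)$, which you obtain by citing \cref{sing_locus} directly (the same fact used in the proof of \cref{cor_tt}).
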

\begin{proof}
    This is an immediate consequence of \cref{cor_tt,main_result}.
    \end{proof}
The next corollary was first established by Stevenson in \cite[Theorem~4.9]{Stevenson:2014b}. It was also recently obtained by Briggs in \cite[Example~A.14]{Briggs-Walker}.
\begin{corollary}\label{cor_ci}
  In the notation and setting of \cref{main_result}, if   $\f$ is an $R$-regular sequence, then there is a bijection of lattices: \[
\begin{tikzcd}
\biggm\{\begin{matrix}\text{Thick subcategories}\\ \text{of } \Df(R/(\f)) \end{matrix} \biggm\}\arrow[r,shift left=0.8ex,"\sigma"]&\biggm\{\begin{matrix}\text{Specialization closed subsets}\\ \text{of } \supp(A,w)\end{matrix} \biggm\}\arrow[l,shift left=0.8ex,"\tau"]
\end{tikzcd}\,.
\]
\end{corollary}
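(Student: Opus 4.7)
The plan is to deduce the corollary directly from \cref{main_result} by first establishing a triangulated equivalence $\Df(E)\simeq \Df(R/(\f))$ whenever $\f$ is an $R$-regular sequence.

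The initial step is to observe that the $R$-regularity of $\f=f_1,\ldots,f_n$ is precisely what makes the Koszul complex $E$ acyclic in positive degrees with $\h_0(E)=R/(\f)$. Equivalently, the augmentation $\epsilon\colon E\to R/(\f)$ (with $R/(\f)$ placed in homological degree zero) is a quasi-isomorphism of dg $R$-algebras. This is the single input that requires the regularity assumption.

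Next, I would invoke the standard derived Morita principle: a quasi-isomorphism of dg algebras induces an equivalence of derived categories via restriction and derived extension of scalars (see, e.g., \cite[Section~9]{Krause:2022}). Applied to $\epsilon$, this yields a triangulated equivalence $\D(E)\simeq \D(R/(\f))$. Since $\h(\epsilon)$ is an isomorphism of graded rings, the notion of ``finitely generated homology over $R$'' is preserved under this equivalence (it coincides on both sides with finite generation of homology over $R/(\f)=\h(E)$), and so the equivalence restricts to $\Df(E)\simeq \Df(R/(\f))$.

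Finally, any triangulated equivalence induces an inclusion-preserving bijection between lattices of thick subcategories. Composing this bijection with the one provided by \cref{main_result} gives the stated bijection. I do not foresee a substantive obstacle: the statement is a direct specialization of \cref{main_result} once the $E\simeq R/(\f)$ identification is in hand. The only mild care needed is the verification that the restriction-of-scalars functor genuinely preserves the finiteness condition defining $\Df$, which is immediate because $\h(E)$ and $R/(\f)$ agree as graded rings.
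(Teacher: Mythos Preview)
Your proposal is correct and follows exactly the paper's approach: use that the augmentation $E\to R/(\f)$ is a quasi-isomorphism of dg algebras (since $\f$ is regular) to obtain $\Df(E)\simeq\Df(R/(\f))$, and then apply \cref{main_result}. You supply a bit more detail than the paper does, but the argument is the same.
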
 
\begin{proof}
    As $\f$ is an $R$-regular sequence, the augmentation map $E\to R/(\f)$ is a quasi-isomorphism of dg algebras. This induces an equivalence of triangulated categories $\Df(E)\equiv \Df(R/(\f))$, and so now the desired classification is obtained from \cref{main_result}.
\end{proof}

\begin{corollary}
\label{cor_generator}
In the notation and setting of \cref{main_result}, $\Df(E)$ has a classical generator if and only if $\supp(A,w)$ is a closed subset. In particular, if (i) $\f=0$ or (ii) $\f\neq 0$ and $R$ is quasi-excellent and local, then $\Df(E)$ has a classical generator. 
\end{corollary}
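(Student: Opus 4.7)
The plan is to deduce this from \cref{generators} via the curved BGG equivalence, together with a few elementary observations about $\supp(A,w)$ in the two special cases. First I would note that the triangulated equivalence $\mathsf{h}\colon \Df(E)\to \perf(A,w)$ of \cref{curved_dg} transports classical generators to classical generators, so the biconditional in the statement reduces to showing that $\perf(A,w)$ admits a classical generator if and only if $\supp(A,w)$ is closed. Since $R$ is regular and $A=R[\chi_1,\ldots,\chi_n]$ is a polynomial ring over $R$ with each $\chi_i$ of degree $-2$, $A$ is a graded commutative noetherian regular ring concentrated in even degrees, so \cref{generators} applies and yields the biconditional directly.

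For case (i) where $\f=0$, one has $w=0$, and \cref{sing_locus} gives $\supp(A,0)=\spec(A)$, which is trivially closed.

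For case (ii) where $\f\neq 0$ and $R$ is quasi-excellent and local, I would argue as follows. Since $R$ is regular and local it is a domain, so $A$ is a domain as well. As at least one $f_i$ is nonzero, $w=f_1\chi_1+\cdots+f_n\chi_n$ is a nonzero element of $A$ and therefore $A$-regular. As recalled in the proof of \cref{sing_locus}, in this setting $\supp(A,w)$ can be identified with the (graded) singular locus of the hypersurface ring $A/(w)$. The ring $A$ is quasi-excellent since it is a finitely generated algebra over the quasi-excellent ring $R$, and hence its quotient $A/(w)$ is quasi-excellent as well. Since quasi-excellent rings have closed singular locus, $\supp(A,w)$ is closed, as required.

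The main thing requiring care is the reduction in case (ii) to the singular locus of $A/(w)$; this is exactly the identification already used in the proof of \cref{sing_locus}, so I would simply invoke it rather than re-derive it. The remaining verifications, namely that quasi-excellence is preserved under finitely generated (polynomial) extensions and under quotients, and that the graded singular locus inherits closedness from the ungraded one, are standard and I would only mention them in passing.
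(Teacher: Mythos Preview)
Your proof is correct and matches the paper's approach: the biconditional follows from \cref{generators} via the curved BGG equivalence (the paper simply cites \cref{generators,main_result}, which amounts to the same thing), and the paper leaves the ``in particular'' part implicit from the discussion surrounding \cref{sing_locus}. Your justification of case~(ii) via quasi-excellence of $A/(w)$ is a correct way to fill in the details the paper omits.
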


\begin{proof}
    This is a direct consequence of \cref{generators,main_result}. 
\end{proof}

Next we clarify the connection between the theory of support on $\Df(E)$ obtained using the curved BGG correspondence and the theory of cohomological support defined on $\Df(E)$ from \cite{Pol,Po}; see also \cite{Av2,ABInvent, Jorgensen:2002}.
\begin{chunk}\label{Hochschild}
Let $\env[R]{E}\coloneqq E\otimes_R E$ be the enveloping dg algebra of $E$ over $R$, and regard $E$ as a dg $\env[R]{E}$-module via the multiplication map $\env[R]{E}\rightarrow E$.
Recall that the Hochschild cohomology of $E$ over $R$ is 
$
\hc(E/R)\coloneqq \Ext_{\env[R]{E}}(E,E).
$
By \cite[2.9]{ABJA}, the graded algebra $\hc(E/R)$ is isomorphic to $\h(E)[\chi_1,\ldots,\chi_n]$, where $|\chi_i|=-2$. 

For each $M$ in $\D(E)$, the left derived functor $-\lotimes_EM\colon \D(\env[R]{E})\rightarrow \D(E)$ induces a homomorphism of graded $R$-algebras
$$
A\rightarrow  \hc(E/R)\xrightarrow{-\lotimes_EM}\Ext_E(M,M)\,.
$$
 This defines a central action of $A$ on $\D(E)$; 
see \cite[Section 3]{AS}. The  \textbf{cohomological support} of a pair $(M,N)$ over $E$ is
\[
\V_E(M,N)\coloneqq\supp_A \Ext_E(M, N)\,,
\]
and set $\V_E(M)\colonequals\V_E(M,M)$. 

When $M,N$ are in $\Df(E)$, by \cite[Theorem 4.3.2]{Po}, $\Ext_{E}({M},{N})$ is finitely generated over $A$, and hence  $\V_E(M,N)$ is a closed subset of $\spec A$. 
\end{chunk}

\begin{proposition}
    \label{coincide}
    For each $M,N$ in  $\Df(E)$, one has \[\V_E(M,N)=\supp_{(A,0)}(\Hom_A(\mathsf{h}M,\mathsf{h}N))\,.\]
    As a consequence, $\V_E(M)=\supp_{(A,w)}(\mathsf{h}M)$
\end{proposition}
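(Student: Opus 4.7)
The plan is to establish both statements through the curved BGG equivalence $\mathsf{h}\colon \Df(E) \to \perf(A,w)$ from \cref{curved_dg}, combined with the fact that $A = R[\chi_1,\ldots,\chi_n]$ is regular (since $R$ is) so that \cref{p:perf} gives $\perf(A,0) \equiv \perf(A)$. The first step is to identify
\[
\Ext_E(M,N) \cong \h(\Hom_A(\mathsf{h}M,\mathsf{h}N))
\]
as graded $A$-modules. The BGG equivalence provides bijections $\Hom_{\D(E)}(M,\shift^i N) \cong \Hom_{\perf(A,w)}(\mathsf{h}M, \shift^i \mathsf{h}N)$ for each $i$, and by \cref{dg_cat} the right-hand side equals $\h_i(\Hom_A(\mathsf{h}M,\mathsf{h}N))$. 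Summing over $i$ yields the graded abelian group isomorphism. For $A$-linearity one must verify that the central $A$-action on $\D(E)$ coming from Hochschild cohomology (\cref{Hochschild}) matches, under $\mathsf{h}$, the tautological multiplication action of $A$ on objects of $\perf(A,w)$. The point is that the Hochschild generators $\chi_i$ correspond to the polynomial generators of $A$ by construction, so this compatibility is implicit in Martin's correspondence \cite{Martin}.

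Once the isomorphism is established, note that $\Hom_A(\mathsf{h}M,\mathsf{h}N)$ lies in $\perf(A,0) \equiv \perf(A)$, so it may be viewed as a perfect complex of $A$-modules. For such a complex over a noetherian ring, the support equals the support of its homology (a standard fiberwise Nakayama argument), so
\[
\supp_{(A,0)}(\Hom_A(\mathsf{h}M,\mathsf{h}N)) = \supp_A \h(\Hom_A(\mathsf{h}M,\mathsf{h}N)) = \supp_A \Ext_E(M,N) = \V_E(M,N),
\]
which gives the main claim.

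The consequence follows by taking $M=N$: this yields $\V_E(M) = \supp_{(A,0)}(\Hom_A(\mathsf{h}M,\mathsf{h}M))$, and it remains to show this equals $\supp_{(A,w)}(\mathsf{h}M)$. I would argue fiberwise: by \cref{fg_closed}, $\p \in \supp_{(A,w)}(\mathsf{h}M)$ if and only if $\mathsf{h}M(\p) \neq 0$ in $\perf(\kappa(\p),w(\p))$. If $\mathsf{h}M(\p) = 0$ then $\Hom_A(\mathsf{h}M,\mathsf{h}M)(\p)$ vanishes; conversely, if $\mathsf{h}M(\p) \neq 0$, then $\id^{\mathsf{h}M(\p)}$ represents a nonzero class in $\h^0(\Hom_{\kappa(\p)}(\mathsf{h}M(\p),\mathsf{h}M(\p)))$, so the Hom complex is nonzero in $\perf(\kappa(\p),0)$. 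The main obstacle I expect is the $A$-linearity verification in the first step; once that compatibility is in hand, the remainder is a formal combination of \cref{p:perf,dg_cat,fg_closed} with standard facts about supports of perfect complexes over noetherian rings.
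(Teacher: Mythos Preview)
Your proposal is correct and follows essentially the same route as the paper. The paper also begins with the $A$-linear isomorphism $\Ext_E(M,N)\cong \h(\Hom_A(\mathsf{h}M,\mathsf{h}N))$ induced by $\mathsf{h}$, and then asserts the first equality ``immediately''; your proof makes explicit the reason this is immediate (via $\perf(A,0)\equiv\perf(A)$ and the identification of $\supp_{(A,0)}$ with homological support for perfect complexes), which the paper leaves tacit. For the consequence, the paper packages your fiberwise argument into a single citation: it uses the equality case of \cref{support}(4) (valid since $A$ is concentrated in even degrees), together with $\Hom_A(P,P)\cong P^\vee\otimes_A P$, to obtain $\supp_{(A,0)}(\Hom_A(P,P))=\supp_{(A,-w)}(P^\vee)\cap\supp_{(A,w)}(P)=\supp_{(A,w)}(P)$. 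Your direct fiberwise computation via \cref{fg_closed} is exactly the content of that equality case, so the two arguments coincide.
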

\begin{proof}
    The curved BGG equivalence $\mathsf{h}$ from \ref{curved_dg} induces an $A$-linear isomorphism  
    \[
    \Ext_E(M,N)\cong \h(\Hom_A(\mathsf{h}M,\mathsf{h}N))\,,
    \]
    and so the first equality follows immediately. The second equality follows from the first using that 
    \[
    \supp_{(A,0)}(\Hom_A(P,P))=\supp_{(A,w)}P
    \]
    which is a consequence of equality holding in \cref{support}(4) for each $P$ in $\perf(A,w)$.
\end{proof}

\begin{proposition}\label{support_vars_thick}
    For $M,N$ in $\Df(E)$, one has $M$ is in $\thick_{\D(E)}(N)$ if and only if $\V_E(M)\subseteq \V_E(N)$. 
\end{proposition}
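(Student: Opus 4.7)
The plan is to deduce the result by assembling the curved BGG equivalence with the classification of thick subcategories in $\perf(A,w)$ and the identification of cohomological support with the support from the BGG correspondence. The forward implication is already immediate from elementary properties of $\V_E(-)$ (it behaves well on triangles and summands), so the content is in the reverse implication.

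First I would observe that $A = R[\chi_1,\ldots,\chi_n]$ is a graded polynomial extension of the regular ring $R$, so $A$ is itself a regular commutative noetherian graded ring concentrated in even degrees. Next, because $\Df(E)$ is a thick subcategory of $\D(E)$ containing $N$, one has $\thick_{\D(E)}(N) = \thick_{\Df(E)}(N)$, so the condition $M \in \thick_{\D(E)}(N)$ is intrinsic to $\Df(E)$.

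I would then apply the curved BGG equivalence $\mathsf{h}\colon \Df(E) \xra{\equiv} \perf(A,w)$ recalled in \cref{curved_dg}. Since $\mathsf{h}$ is a triangulated equivalence, it induces a bijection between thick subcategories of the source and thick subcategories of the target; in particular,
\[
M \in \thick_{\Df(E)}(N) \iff \mathsf{h}M \in \thick_{\perf(A,w)}(\mathsf{h}N).
\]
Because $A$ is regular and concentrated in even degrees, \cref{classification_reg} applies and gives
\[
\mathsf{h}M \in \thick_{\perf(A,w)}(\mathsf{h}N) \iff \supp_{(A,w)}(\mathsf{h}M) \subseteq \supp_{(A,w)}(\mathsf{h}N).
\]
Finally, the identification $\V_E(-) = \supp_{(A,w)}(\mathsf{h}(-))$ from \cref{coincide} translates the support containment back into $\V_E(M) \subseteq \V_E(N)$, closing the chain of equivalences.

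The main potential obstacle is essentially bookkeeping: one needs that the classification bijection in \cref{classification_reg} really transports through $\mathsf{h}$ as a bijection of thick subcategories (which is automatic for any triangulated equivalence), and that regularity of $A$ is available so that \cref{classification_reg} (rather than only \cref{classification}, which requires the action) is applicable. No further homological input is required beyond what has already been developed.
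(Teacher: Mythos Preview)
Your proposal is correct and follows essentially the same approach as the paper. The paper's proof invokes \cref{main_result} together with \cref{coincide}; since \cref{main_result} is itself proved by combining the curved BGG equivalence (\cref{curved_dg}) with \cref{classification_reg}, you have simply unpacked that reference into its two constituent ingredients and applied them directly.
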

\begin{proof}
    Using the classification in \cref{main_result}, one only needs to refer to \cref{coincide}. 
\end{proof}

The next result specializes to \cite[Corollary~4.11]{Stevenson:2014b} when $\f$ is a regular sequence; see also \cite[Theorem~1]{LP}. 

\begin{corollary}\label{duality}
 In the notation and setting of \cref{main_result}, if $M$ is in $\Df(E)$, then 
 \[
 \thick_{\D(E)}(M)=\thick_{\D(E)}(\RHom_E(M,E))\,.
 \]
Furthermore, the involution $\RHom_E(-,E)$ on $\Df(E)$ fixes the lattice of thick subcategories of $\Df(E)$.
\end{corollary}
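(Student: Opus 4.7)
The plan is to reduce both assertions to a single equality of cohomological supports, namely
\[
\V_E(M) = \V_E(\RHom_E(M,E))\quad\text{for every }M\in\Df(E).
\]
Once this is in hand, \cref{support_vars_thick} immediately yields $M \in \thick_{\D(E)}(\RHom_E(M,E))$ and $\RHom_E(M,E)\in\thick_{\D(E)}(M)$, giving the first equality in the statement.

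To establish the support equality, I would set $N := \RHom_E(M,E)$ and exploit that, by the Gorenstein hypothesis cited from \cite{FJ}, $(-)^\vee=\RHom_E(-,E)$ is a quasi-involutive contravariant auto-equivalence of $\Df(E)$. This auto-equivalence induces a natural isomorphism of graded abelian groups $\Ext_E(N,N) \cong \Ext_E(M,M)^{\mathrm{op}}$. The central $A$-action from \cref{Hochschild} is defined as a morphism of graded $R$-algebras $A \to \hc(E/R)$ acting naturally on every object, so it can be viewed as a natural transformation $A \to \End(\mathrm{id}_{\D(E)})$ and hence commutes with every auto-equivalence of $\D(E)$. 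Consequently the displayed isomorphism is $A$-linear (passage to the opposite algebra is invisible on the underlying $A$-module, as $A$ is commutative), and therefore $\V_E(N)=\supp_A\Ext_E(N,N)=\supp_A\Ext_E(M,M)=\V_E(M)$.

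For the furthermore clause, let $\T\subseteq\Df(E)$ be a thick subcategory and set $\T^\vee := \{\RHom_E(M,E) : M\in\T\}$. This is again a thick subcategory, since $\RHom_E(-,E)$ is an auto-equivalence. By the support equality above,
\[
\sigma(\T)=\bigcup_{M\in\T}\V_E(M)=\bigcup_{M\in\T}\V_E(\RHom_E(M,E))=\sigma(\T^\vee),
\]
so the bijection in \cref{main_result} forces $\T=\T^\vee$.

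The main subtlety, and the step I would verify most carefully, is the $A$-linearity of the natural isomorphism $\Ext_E(N,N)\cong\Ext_E(M,M)^{\mathrm{op}}$. This is the only place where one needs to unpack the definition of the central $A$-action and check its compatibility with Grothendieck duality; everything else is a formal consequence of the classification in \cref{main_result} and the support dictionary in \cref{coincide,support_vars_thick}.
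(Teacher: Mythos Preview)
Your proposal is correct and follows essentially the same route as the paper: establish $\V_E(M)=\V_E(M^\vee)$ via the isomorphism $\Ext_E(M,M)\cong \Ext_E(M^\vee,M^\vee)$ induced by Grothendieck duality, then invoke \cref{support_vars_thick} (and the classification in \cref{main_result}) to conclude. The paper is terser and does not spell out the $A$-linearity check or the separate argument for the ``furthermore'' clause, but your added care on these points is appropriate and does not constitute a different strategy.
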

\begin{proof}
   Let $(-)^\vee=\RHom_E(-,E)$, and observe that 
   \[
   \Ext_E(M,M)\cong \Ext_E(M^\vee,M^\vee)\,.
   \]
   Thus, $\V_E(M)=\V_E(M^\vee)$ and so the desired result follows from \cref{coincide,support_vars_thick}.
\end{proof}

Recall the complexity of $M,N$ in $\Df(E)$ is 
\[
\cx_E(M,N)= \inf\{c:\exists a>0\text{ such that }an^{c-1}\geqslant \rank_k (\Ext_E^n(M,N)\otimes k)\text{ for }n\gg 0\}
\]
The next result was shown in \cite[Theorem 5.3.1]{Po} using a different approach; this also recovers a result of Avramov--Buchweitz in \cite[Corollary~5.7]{ABInvent} (see also \cite[Theorem 2]{LP}, which argues in a similar fashion to the way below).

\begin{corollary}\label{cor_sym_cx}
  For $M,N$ in $\Df(E)$, one has $\cx_E(M,N)=\cx_E(N,M)$. 
\end{corollary}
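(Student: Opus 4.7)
The plan is to establish the symmetric support identity $\V_E(M,N) = \V_E(M) \cap \V_E(N)$, which is visibly invariant under swapping $M$ and $N$, and then appeal to the standard fact that the complexity of a finitely generated graded $A$-module agrees with the Krull dimension of its support. In particular, once the support is known to be symmetric, so is the complexity.

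For the support identity, I would chase through the curved BGG correspondence. As in the proof of \cref{coincide}, one obtains an $A$-linear isomorphism $\Ext_E(M,N) \cong \h(\Hom_A(\mathsf{h}M, \mathsf{h}N))$, and since $\mathsf{h}M$ is finitely generated projective over the graded ring $A$, the usual module-theoretic identification yields a further isomorphism
\begin{equation*}
\Hom_A(\mathsf{h}M, \mathsf{h}N) \cong (\mathsf{h}M)^\vee \otimes_A \mathsf{h}N\qquad\text{in } \perf(A,0)\,,
\end{equation*}
where $(\mathsf{h}M)^\vee \in \perf(A,-w)$. Because $A$ is concentrated in even degrees, the equality clause of \cref{support}(4) applies to give
\begin{equation*}
\V_E(M,N) = \supp_{(A,-w)}\bigl((\mathsf{h}M)^\vee\bigr) \cap \supp_{(A,w)}(\mathsf{h}N)\,.
\end{equation*}
By \cref{fg_closed}, $\supp_{(A,v)}(P)$ equals $\supp_A(A\cdot \id^P)$, which is unaffected by $A$-linear dualization, so the first factor coincides with $\V_E(M)$ by \cref{coincide}. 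Hence $\V_E(M,N) = \V_E(M) \cap \V_E(N) = \V_E(N,M)$.

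The step I expect to take the most care is the final translation from support symmetry to complexity symmetry. Since $\Ext_E(M,N)$ is a finitely generated graded $A$-module by the result of Pollitz recorded in \cref{Hochschild}, and the variables $\chi_1,\ldots,\chi_n$ generating $A$ over $R$ all have the same even degree, a standard Hilbert-series argument identifies the polynomial growth rate defining $\cx_E(M,N)$ with the Krull dimension of $\supp_A \Ext_E(M,N) = \V_E(M,N)$. Repeating the argument with $M$ and $N$ interchanged and invoking the just-established symmetry of supports then yields the corollary.
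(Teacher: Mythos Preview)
Your approach differs from the paper's and has a gap in the final step. The paper does not go through support at all: it uses \cref{duality} together with a thick-subcategory argument to show $\cx_E(M,N)=\cx_E(M^\vee,N^\vee)$, and then invokes the Grothendieck duality isomorphism $\Ext_E(M^\vee,N^\vee)\cong\Ext_E(N,M)$. Your route instead proves the intersection formula $\V_E(M,N)=\V_E(M)\cap\V_E(N)$, which is of independent interest (this is the Avramov--Buchweitz style statement), and your derivation of it via the curved BGG tensor decomposition $\Hom_A(\mathsf{h}M,\mathsf{h}N)\cong(\mathsf{h}M)^\vee\otimes_A\mathsf{h}N$ and the equality clause of \cref{support}(4) is clean and correct.

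The gap is in your last paragraph: the assertion that $\cx_E(M,N)$ equals the Krull dimension of $\V_E(M,N)$ fails when $R$ is not a field. For example, take $R$ regular local of dimension $d>0$, $\f=0$, and $M=N=R$; then $\Ext_E(R,R)\cong A$, so $\dim\V_E(R,R)=d+n$, whereas $\cx_E(R,R)=n$. What is true (assuming $R$ local with residue field $k$, as the definition of complexity implicitly requires) is that $\cx_E(M,N)$ equals the dimension of the closed fiber $\V_E(M,N)\cap\mathcal{V}(\m)$: by Nakayama one has $\supp_{k[\chi]}\bigl(\Ext_E(M,N)\otimes_R k\bigr)=\V_E(M,N)\cap\mathcal{V}(\m)$, and the Hilbert-series argument you invoke applies to the finitely generated $k[\chi_1,\ldots,\chi_n]$-module $\Ext_E(M,N)\otimes_R k$. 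Your argument is then easily repaired: having established $\V_E(M,N)=\V_E(N,M)$ as subsets of $\spec(A)$, their closed fibers coincide as well, and hence so do the complexities.
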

\begin{proof}
Let $(-)^\vee=\RHom_E(-,E)$. 
 If one considers the category $\T$ of all $X$ in $\Df(E)$ such that $\cx_E(M,X)\leqslant \cx_E(M,N)$, then $\T$ is thick. Also, as $N$ is in $\T$ we can use \cref{duality} to obtain that $N^\vee$ belongs to $\T$. Thus, $\cx_E(M,N^\vee)\leqslant\cx_E(M,N)$. Repeating this argument (three times) it follows that 
 \[
 \cx_E(M,N)=\cx_E(M^\vee,N^\vee)\,.
 \]
 Now it remains to notice that Grothendieck duality establishes the isomorphism $\Ext_E(M^\vee,N^\vee)\cong \Ext_E(N,M)$ and hence, $\cx_E(M,N)=\cx_E(N,M)$. 
\end{proof}

\begin{remark}
Moving beyond when $R$ is regular, one could consider $\Df(E/R)$ as in \cite{BMP,BW,Iyengar-Pollitz-Sanders}, where many results still hold in the \emph{relative} setting.  The objects of $\Df(E/R)$ are those of  $\Df(E)$ that are perfect over $R$. In this setting, the curved BGG functor from \cref{curved_dg} restricts to an exact embedding  $\Df(E/R)\to \perf(A,w)$. However, it is unclear whether the essential image is a tensor submodule of $\perf(A,w)$. If it were, then 
it seems likely that the arguments in this article would apply to classify the thick tensor submodules of $\Df(E/R)$, and show things like 
\[
\V_E(M)\subseteq \V_E(N)\iff M\in\thick_{\D(E)}^{\odot}(N)\,.
\]
\end{remark}
We end the article by giving a bijective correspondence between \(\Sing(A, w)\) and the disjoint union of the specific homogeneous spectrum of graded polynomial algebras, as well as some concluding remarks.

For each $\p\in \spec R$, set \[c(\p)\colonequals n-(\dim R_\p-\embdim(R_\p/\f R_\p))\,,\] where $\dim$ and $\embdim$ represents the Krull dimension and the embedding dimension respectively. If $\f$ is an $R$-regular sequence, then $c(\p)$ coincides with codimension of the complete intersection ring $(R/(\f))_\p$.

\begin{proposition}\label{decomposition}
There is a bijection
$$
   \supp(A,w)\cong\bigsqcup_{\p\in \spec R \text{ \rm and } (\f)\subseteq \p}\spec S(\p),
$$
where  $S(\p)=\rm{Sym}_{\kappa(\p)}(\shift^{-2}\kappa(p)^{c(\p)})$. 
\end{proposition}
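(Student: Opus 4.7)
The plan is to fiber $\supp(A,w)$ over $\spec R$ via contraction, and identify each nonempty fiber with $\spec S(\p)$ for some $\p\supseteq(\f)$.  Since $A=R[\chi_1,\dots,\chi_n]$ is a polynomial ring over the regular ring $R$, it is itself regular and concentrated in even degrees, so \cref{sing_locus} gives $\supp(A,w)=\{\mathfrak{q}\in\spec A\mid w\in\mathfrak{q}^2 A_\mathfrak{q}\}$.

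First I would show that $\mathfrak{q}\in\supp(A,w)$ forces $(\f)\subseteq\p$ for $\p:=\mathfrak{q}\cap R$.  Reducing modulo $\p A$ and localizing, the image of $w$ in $\kappa(\p)[\chi_1,\dots,\chi_n]_{\tilde\mathfrak{q}}$ (where $\tilde\mathfrak{q}$ is the image of $\mathfrak{q}$) is the linear form $\bar w=\sum_i \bar{f_i}\chi_i$, and it must lie in $\tilde\mathfrak{q}^2\kappa(\p)[\chi_i]_{\tilde\mathfrak{q}}$.  Clearing denominators, $s\bar w\in\tilde\mathfrak{q}^2$ in $\kappa(\p)[\chi_i]$ for some homogeneous $s\notin\tilde\mathfrak{q}$.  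Differentiating by $\chi_j$, which sends $\tilde\mathfrak{q}^2$ into $\tilde\mathfrak{q}$, and using primality of $\tilde\mathfrak{q}$ together with $s\notin\tilde\mathfrak{q}$ forces $\bar{f_j}\in\tilde\mathfrak{q}\cap\kappa(\p)=0$, so $(\f)\subseteq\p$.

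Next, fix $\p\supseteq(\f)$; after localizing at $\p$ I may assume $R$ is local with maximal ideal $\p$.  Let $\mu$ be the rank of the image of $(f_1,\dots,f_n)$ in $\p/\p^2$, so $c(\p)=n-\mu$.  An invertible $R$-linear change of the $f_i$'s together with its inverse-transpose change of the $\chi_i$'s is an $R$-algebra automorphism of $A$ fixing $w$, so I may assume that $f_1,\dots,f_\mu$ extend to a regular system of parameters of $R$ and that $f_{\mu+1},\dots,f_n\in\p^2$.  For any homogeneous prime $\mathfrak{q}$ of $A$ over $\p$, $\p^2 A\subseteq\mathfrak{q}^2$, so the image of $w$ in the Zariski cotangent $\mathfrak{q} A_\mathfrak{q}/\mathfrak{q}^2 A_\mathfrak{q}$ equals
\[
\sum_{\substack{1\leq i\leq\mu\\ \chi_i\notin\mathfrak{q}}} \bar{\chi_i}(\mathfrak{q})\cdot\bar{f_i},
\]
with $\bar{\chi_i}(\mathfrak{q})\in\kappa(\mathfrak{q})^\times$ (the summands with $\chi_i\in\mathfrak{q}$ already lie in $\mathfrak{q}^2$).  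The main obstacle is verifying that $\bar{f_1},\dots,\bar{f_\mu}$ remain $\kappa(\mathfrak{q})$-linearly independent in this cotangent space; this follows from flatness of the local homomorphism $R\to A_\mathfrak{q}$ with regular fiber, which guarantees that a regular system of parameters of $R$ extends to one of $A_\mathfrak{q}$.  Given the linear independence, the displayed sum vanishes iff $\chi_i\in\mathfrak{q}$ for every $i=1,\dots,\mu$.

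Finally, homogeneous primes of $A$ with contraction $\p$ correspond to homogeneous primes of $\kappa(\p)[\chi_1,\dots,\chi_n]$, and those containing $\chi_1,\dots,\chi_\mu$ correspond to homogeneous primes of $\kappa(\p)[\chi_{\mu+1},\dots,\chi_n]=S(\p)$.  Taking a disjoint union over $\p\supseteq(\f)$ yields the asserted bijection; its independence from the auxiliary substitution is automatic since $S(\p)$ is defined intrinsically.
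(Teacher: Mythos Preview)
Your argument is correct and takes a genuinely different route from the paper's own proof. The paper does not work directly with the Jacobian-type description of $\supp(A,w)$ at all; instead it identifies $\supp(A,w)$ with $\bigcup_{M\in\Df(E)}\V_E(M)$ via \cref{main_result} and \cref{coincide}, then invokes a fiberwise decomposition of cohomological support from \cite[Theorem~11.3]{BIKnorm} together with computations of support varieties from \cite[Proposition~3.4.3 and Example~4.1.4(2)]{Po} to obtain $\bigsqcup_\p\spec S(\p)$. In other words, the paper's proof passes through the curved BGG correspondence and the theory of cohomological support, pulling in several external results.

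Your approach, by contrast, is entirely self-contained once one grants the equality $\supp(A,w)=\{\mathfrak{q}:w_\mathfrak{q}\in\mathfrak{q}^2A_\mathfrak{q}\}$ from \cref{sing_locus}; the rest is direct commutative algebra (fibering over $\spec R$, the coordinate change fixing $w$, and the flatness argument showing that a regular system of parameters of $R_\p$ stays independent in the cotangent space of $A_\mathfrak{q}$). This is more elementary and does not rely on the main classification theorem or on \cite{BIKnorm,Po}. Two small points worth tightening: in your differentiation step you implicitly use $\bar w\in\tilde{\mathfrak{q}}$ to dispose of the term $(\partial_{\chi_j}s)\bar w$, which is true but should be said; and the flatness claim is really about the graded-local map $R_\p\to A_\mathfrak{q}$, whose regularity and dimension formula in the graded setting deserve a word of justification. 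Neither is a real gap.
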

\begin{proof}
Observe that 
\begin{align*}
    \supp(A,w)&=\bigcup_{M\in \Df(E)}\V_E(M)\\
    &=\bigsqcup_{\p\in \spec {R}}\bigcup_{M\in \Df(E)}\V_{{E}_\p}({M}_\p,\kappa(\p))\,,
\end{align*}
where the first equality holds by \cref{main_result,coincide}, and the second equality is from  \cite[Theorem 11.3]{BIKnorm}. Also,  note that when $\f\not\subseteq \p$, then $E_\p$ is contractible and so each $\V_{{E}_\p}({M}_\p,\kappa(\p))$ is empty.  Hence, we can assume $\f\subseteq \p$ and we are done upon noting that 
\[
\V_{{E}_\p}({M}_\p,\kappa(\p))\subseteq \spec S(\p)=\V_{E_\p}((R/\p)_\p, \kappa(\p)))\,;
\]
 ones makes use of a functionality of cohomological supports in \cite[Proposition~3.4.3]{Po} for the containment, and  \cite[Example~4.1.4(2)]{Po} to obtain the equality.
\end{proof}
\begin{remark}
    Let $\Proj A/(w)$ denote the set of homogeneous prime ideals that do not contain $A/(w)_{<0}$. Define $\Sing(\Proj A/(w))\colonequals \supp(A,w)\cap \Proj A/(w)$. It is straightforward to verify that  \[\supp(A,w)=\Sing(\Proj A/(w))\bigsqcup \{(\p,\chi_1,\cdots,\chi_n)\mid \p\in \spec R \text{ and }(\f)\subseteq \p\}\,.\]
    Combining this with Proposition \ref{decomposition}, there is a bijection
$$
\Sing (\Proj A/(w))\cong \bigsqcup_{\p\in \spec R\text{ and } (\f)\subseteq \p}\Proj S(\p)\,;
$$
this was proved by Stevenson \cite[Proposition 10.4]{Stevenson:2014a} when $\f$ is a regular sequence. 
\end{remark}

\begin{remark}   \label{remark_ben_jian_josh}
In this article, we have focused on the classification of thick subcategories of $\Df(E)$. In the spirit of \cite{CI}, we avoid infinite constructions to achieve this result; the former classified thick subcategory of $\Df(A)$ when $A$ is an artinian complete intersection ring without the use of any infinitely generated modules (over $\Ext_A(k,k)$). However, in light of the approach of Neeman~\cite{Neeman92}, later clarified by many others like  \cite{BIKann, BIKtop, Stevenson:2013a, Stevenson:2014a} and more recently in \cite{Lattices,Bartheletal,BIKPfiber}, it is natural to wonder if $\Df(E)$ can be regarded as the compact objects in a certain ``big" triangulated category $\mathsf{K}$ and deduce the classification of thick subcategories in \cref{main_result} from a classification of the localizing subcategories of $\mathsf{K}$. In upcoming work joint with Briggs, we flesh out this approach by suitably extending Martin's thesis to an equivalence, similar to how Stevenson extended an equivalence of Orlov~\cite[Theorem 2.1]{Orlov06} to certain \emph{big} categories; cf.\@ \cite[Proposition 8.7]{Stevenson:2014a}.
\end{remark}

\section*{Acknowledgments}
We thank Benjamin Briggs for several helpful comments on the manuscript; furthermore, we are very grateful to Briggs for conversations that improved the article and clarified the  connections between some of the results in this work and the appendix in \cite{Briggs-Walker}.
The second author thanks James Cameron, Sarasij Maitra, and Tim Tribone for a number of enlightening discussions, specifically on how tensor-nilpotence holds over curved algebras in \cref{l_tensor_nilpotence}. 

The authors would also like to thank an anonymous referee for carefully reading the manuscript and providing valuable comments.

The first author was supported by the National Natural Science Foundation of China (No.\@ 12401046) and the Fundamental Research Funds for the Central Universities (No.\@ CCNU24JC001, CCNU25JC025, CCNU25JCPT031). The second author was supported by NSF grant DMS-2302567.

\bibliographystyle{amsplain}
\bibliography{ref}
\end{document}